\newcommand{\vast}{\bBigg@{4}}
\newcommand{\Vast}{\bBigg@{5}}
\newcommand{\Z}{\mathbb{Z}}
\newcommand{\C}{\mathbb{C}}
\newcommand{\R}{\mathbb{R}}
\newcommand{\N}{\mathbb{N}}
\newcommand{\Q}{\mathbb{Q}}
\newcommand{\Res}{\mathrm{Res}}
\newcommand{\pres}[2]{\langle {#1}\ |\ {#2} \rangle}
\newtheorem{theorem}{Theorem}
\newtheorem{lemma}[theorem]{Lemma}
\newtheorem{prop}[theorem]{Proposition}
\newtheorem{corollary}[theorem]{Corollary}
\newtheorem{conjecture}[theorem]{Conjecture}
\newtheorem{problem}[theorem]{Problem}
\numberwithin{theorem}{section}
\numberwithin{equation}{section}
\newcommand{\onetwopres}[3]{\Bigg \langle {#1}\ \Bigg |\
\begin{array}{l}
{#2}\\{#3}
\end{array}\Bigg \rangle}
\newcommand{\onethreepres}[4]{\Bigg \langle {#1}\ \Bigg |\
\begin{array}{l}
{#2}\\{#3}\\{#4}
\end{array}\Bigg \rangle}
\newcommand{\onefivepres}[6]{\Vast \langle {#1}\ \Vast |\
\begin{array}{l}
{#2}\\{#3}\\{#4}\\{#5}\\{#6}
\end{array}\Vast\rangle}
\begin{document}
\title{Cyclically presented groups as Labelled Oriented Graph groups}
\author{Vanni Noferini\thanks{Vanni Noferini acknowledges support by an Academy of Finland grant (Suomen Akatemian p\"{a}\"{a}t\"{o}s 331240) and partial support by the Visiting Fellows Programme of the University of Pisa.} and Gerald Williams\thanks{Gerald Williams was supported for part of this project by Leverhulme Trust Research Project Grant RPG-2017-334.}}

\maketitle

\begin{abstract}
We use results concerning the Smith forms of circulant matrices to identify when cyclically presented groups have free abelianisation and so can be Labelled Oriented Graph (LOG) groups. We generalize a  theorem of Odoni and Cremona to show that for a fixed defining word, whose corresponding representer polynomial has an irreducible factor that is not cyclotomic and not equal to $\pm t$, there are at most finitely many $n$ for which the corresponding $n$-generator cyclically presented group has free abelianisation. We classify when Campbell and Robertson's generalized Fibonacci groups $H(r,n,s)$ are LOG groups and when the Sieradski groups are LOG groups. We prove that amongst Johnson and Mawdesley's groups of Fibonacci type, the only ones that can be LOG groups are Gilbert-Howie groups $H(n,m)$. We conjecture that if a Gilbert-Howie group is a LOG group, then it is a Sieradski group, and prove this in certain cases (in particular, for fixed $m$, the conjecture can only be false for finitely many $n$). We obtain necessary conditions for a cyclically presented group to be a connected LOG group in terms of the representer polynomial and apply them to the Prishchepov groups.
\end{abstract}

\noindent \textbf{Keywords:} cyclically presented group, Fibonacci group, Sieradski group, LOG group, knot group, Wirtinger presentation, circulant matrix, Smith form,  resultant.

\noindent \textbf{MSC:} 20F05 (primary);  57M05, 57M15, 11C20, 15B05, 15B36  (secondary).

\section{Introduction}

A \em Labelled Oriented Graph (LOG) \em consists of a finite graph (possibly with loops and multiple
edges) with vertex set $V$ and edge set $E$ together with three maps $\iota,\tau,\lambda: E \rightarrow V$ called the \em initial
vertex map, terminal vertex map, \em and \em labelling map, \em respectively. The LOG determines a corresponding
\em LOG presentation\em
\[ \pres{V}{\tau(e)^{-1}\lambda(e)^{-1}\iota(e)\lambda(e)\ (e\in E)}.\]
A group with a LOG presentation is called a \em LOG group \em \cite{HowieRibbonDiscComp85}. When the underlying graph is connected we have a \em connected LOG, \em a \em connected LOG presentation\em, and a \em connected LOG group\em. The abelianisation of a LOG group is free abelian with rank equal to the number of components of the LOG and so the abelianisation of a connected LOG group is the infinite cyclic group.

As is well known, the Wirtinger presentation of the fundamental group of a classical knot or link complement is a LOG presentation in which the number of components of the LOG is equal to the number of components of the link and so knot groups are connected LOG groups. In higher dimensions, the fundamental group of the complement of any smoothly embedded, closed, orientable, connected $k$-manifold ($k\geq 2$) in the $(k+2)$-sphere is a connected LOG group \cite{Simon}. In particular, any $k$-knot group (i.e.\,the fundamental group of a $k$-sphere $S^k$ in $S^{k+2}$) is a connected LOG group. Further examples of LOG groups include all right angled Artin groups and braid groups.

A \em cyclic presentation \em is a group presentation of the form
\begin{alignat*}{1}
P_n(w)=\pres{x_0,\ldots ,x_{n-1}}{w(x_i,x_{i+1},\ldots ,x_{i+n-1})\ (0\leq i<n)}
\end{alignat*}
where $n\geq 1$ and the \em defining word \em $w=w(x_0,x_1,\ldots ,x_{n-1})$ is some element of the free group $F(x_0,\ldots ,x_{n-1})$ where subscripts are taken $\bmod$ $n$ and the group $G_n(w)$ it defines is called a \em cyclically presented group. \em The cyclically presented groups that we consider in this article are the  \em Prishchepov groups \em
\begin{alignat*}{1}
P(r,n,k,s,q)=\pres{x_0,\ldots,x_{n-1}}{\prod_{j=0}^{r-1} x_{i+qj}=\prod_{j=0}^{s-1} x_{i+qj+(k-1)}\ (0\leq i<n)}
\end{alignat*}
where $n,r,s\geq 1$, $0\leq k, q<n$, that were introduced in \cite{Prishchepov}, and their special cases the \em generalized Fibonacci groups \em
\begin{alignat*}{1}
H(r,n,s)=\pres{x_0,\ldots,x_{n-1}}{\prod_{j=0}^{r-1} x_{i+j}=\prod_{j=0}^{s-1} x_{i+j+r}\ (0\leq i<n)}
\end{alignat*}
where $n,r,s\geq 1$ \cite{CampbellRobertson75}, the \em groups of Fibonacci type \em $G_n(m,k)=G_n(x_0x_mx_k^{-1})$ ($0\leq m,k<n$, $n \geq 1$) introduced in \cite{CHRsurvey,JohnsonMawdesley} (see \cite{Wsurvey} for a survey), and in particular the \em Gilbert-Howie groups \em $H(n,m)=G_n(x_0x_mx_1^{-1})$ of \cite{GilbertHowie} and the \em Sieradski groups \em $S(2,n)=G_n(x_0x_2x_1^{-1})$ of \cite{Sieradski}. We remark that the Prishchepov groups are precisely the groups of type $\mathfrak{F}$ of \cite{BW2} with non-positive defining word. (A word $w$ is \em positive \em if it does not involve the inverse of any generator.)

Connections between HNN extensions of cyclically presented groups and LOG groups have been investigated in \cite{GilbertHowie,HowieWilliams,SV}. An almost complete classification of groups $H(r,n,s)$ that are connected LOG groups was given in \cite{WilliamsLOG}; Chinyere and Bainson classify the perfect groups $H(r,n,s)$ \cite{BainsonChinyere}, completing the connected LOG groups classification. Asphericity of certain cyclic presentations of the form $P_n(x_0wx_1^{-1}w^{-1})$ that are (connected) \em Word Labelled Oriented Graph presentations \em (or \em Wirtinger presentations\em) is established in \cite[Section 3]{HarlanderRosebrock}. In this article we investigate when cyclically presented groups are LOG groups or connected LOG groups.

Any finitely generated abelian group $A$ is isomorphic to a group of the form $A_0\oplus \Z^\beta$ where $A_0$ is a finite abelian group and $\beta=\beta(A)\geq 0$ is the \em Betti number \em (or \em torsion-free rank\em) of $A$. Thus $A$ is infinite if and only if $\beta(A)\geq 1$ and $A$ is a free abelian group if and only if $A_0=1$. Given a group presentation $P=\pres{x_0,\ldots,x_{n-1}}{R_0,\ldots ,R_{m-1}}$ ($n,m\geq 1$) the \em relation matrix \em of $P$ is the
$n\times m$ integer matrix $M$ whose $(i,j)$ entry is the exponent sum of generator $x_i$ in relator $R_j$. If the rank of $M$ is $r$ and the invariant factors of the Smith Form of $M$ are $s_1,\ldots,s_{r}$ then the abelianization of the group $G$ defined by the presentation $P$ is
\[ G^\mathrm{ab}\cong \Z_{s_1}\oplus \ldots \oplus \Z_{s_{r}}\oplus \Z^{n-r};\]
see, for example, \cite[page 146--149, Theorem 3.6]{MKS} or \cite[pages 54--57, Theorem 5]{JohnsonBook}. Thus $\beta(G^\mathrm{ab})=n-r$ and if $G^\mathrm{ab}=A_0\oplus \Z^\beta$ we have $|A_0|=|\prod_{i=1}^r s_i|$, i.e.\,the last non-zero determinantal divisor of $M$, which we denote by $\gamma_r$.

Returning to cyclically presented groups, for each $0\leq i<n$, we write $a_i$ to denote the exponent sum of $x_i$ in $w(x_0,\ldots ,x_{n-1})$. Then the relation matrix of $P_n(w)$ is the circulant matrix $C$ whose first row is $(a_0,a_1,\ldots, a_{n-1})$. The \em representer polynomial \em of $C$ is the polynomial
\[ f(t)=\sum_{i=0}^{n-1}a_it^i \in \Z[t]\]
and we set $g(t)=t^n-1\in \Z[t]$. Given such an $f\in \Z[t]$ we say that $C$ is the $n\times n$ circulant matrix \em associated with \em  $f$. It is well known, and much used in work on cyclically presented groups, that the order of the abelianisation of a cyclically presented group can be expressed as a resultant
\[ |G_n(w)^\mathrm{ab}|=|\mathrm{det}(C)|=|\prod_{\theta^n=1} f(\theta)| = |\mathrm{Res}(f,g)|\]
if this is non-zero, and $G_n(w)^\mathrm{ab}$ is infinite otherwise \cite{JohnsonBook}. As we will only be interested in the absolute values of resultants (and not the sign), to avoid repetitive use of modulus signs we will take $\mathrm{Res}(\cdot , \cdot)$ to mean $|\mathrm{Res}(\cdot , \cdot)|$ throughout this article. Thus we have the following criterion for $G_n(w)$ to be a perfect group:
\begin{alignat}{1}
G_n(w)^\mathrm{ab}=1\Leftrightarrow \mathrm{Res}(f,g)=1.\label{eq:perfect}
\end{alignat}
In particular, if $w$ is positive then $G_n(w)$ is perfect if and only if $w$ has length 1, in which case $G_n(w)=G_n(x_0)=1$.

Results of \cite{NoferiniWilliamsCompanion} allow information about the Smith form of the circulant matrix $C$ to be obtained from the polynomials $f(t),g(t)$, and so reveal structural information about $G_n(w)^\mathrm{ab}$. The following theorem gives a formula for the rank $\rho$ and last non-zero determinantal divisor $\gamma_\rho$ of $C$. Below and throughout this article, given polynomials $p(t),q(t)\in \Z[t]$ we write $(p(t),q(t))$ to denote the monic greatest common divisor of $p(t)$ and $q(t)$.

\begin{theorem}[{\cite[Theorem A and Corollary B]{NoferiniWilliamsCompanion}}]\label{thm:SNFresultant}
  Let $f(t)\in \Z[t]$, $g(t)=t^n-1$ and let $f(t)=F(t)z(t), g(t)=G(t)z(t)$ where $z(t)=(f(t),g(t)) \in \Z[t]$ and let $C$ be the $n\times n$ circulant matrix associated with $f$. If the Smith normal form of $C$ is the matrix $\mathrm{diag}_n(s_1,\ldots ,s_\rho,0,\ldots ,0)$ (so that $\rho=\mathrm{rank} (C)$) then $\rho=n-\mathrm{deg} (z(t))$ and the last non-zero determinantal divisor
  \[ \gamma_\rho = \prod_{i=1}^\rho s_i = \mathrm{Res}(F,G).\]
\end{theorem}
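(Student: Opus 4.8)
The plan is to realise $C$ as the image of the defining polynomial under the regular representation of the cyclic group, and then read off both invariants from the module structure of the cokernel. Write $\Pi$ for the basic circulant (the cyclic shift permutation matrix), so that $C=f(\Pi)$ and the assignment $p(t)\mapsto p(\Pi)$ identifies the ring of $n\times n$ integer circulants with $R:=\Z[t]/(g)$, a free $\Z$-module of rank $n$ on which $t$ acts as $\Pi$. For the rank I would diagonalise $C$ by the discrete Fourier transform: its eigenvalues are exactly $f(\theta)$ as $\theta$ ranges over the $n$ distinct $n$-th roots of unity, so $\rho=\mathrm{rank}(C)$ is the number of such $\theta$ with $f(\theta)\neq 0$. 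Since $g(t)=t^n-1$ is squarefree, the common roots of $f$ and $g$ are simple and their number is $\deg(z)$; hence $\rho=n-\deg(z)$.

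For the determinantal divisor the key observation is that $\gamma_\rho$ equals the order of the torsion subgroup of $\mathrm{coker}(C)=\Z^n/\mathrm{im}(C)$, which in turn is the index $[\overline{L}:L]$ of the image lattice $L=\mathrm{im}(C)$ in its saturation $\overline{L}=(\Q L)\cap\Z^n$. Working inside $R$, I would set $V:=zR=\mathrm{im}(\phi_z)$, where $\phi_z,\phi_F,\phi_G$ denote multiplication by $z,F,G$. Because $g=zG$ with $g$ monic, one has $g\in(z)$, so $\mathrm{coker}(\phi_z)=R/V\cong\Z[t]/(z)$ is free; thus $V$ is saturated in $R$. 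Moreover $L=fR=\phi_F(zR)\subseteq V$ and $\mathrm{rank}(L)=\rho=\deg(G)=\mathrm{rank}(V)$, so $\overline{L}=V$ and therefore $\gamma_\rho=[V:\phi_F(V)]$.

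It remains to evaluate this index as a resultant. I would identify the $\Z[t]$-module $V$ explicitly: since $\phi_z\colon R\to V$ is surjective with kernel $GR$, the map $h\mapsto zh$ induces a $\Z[t]$-linear isomorphism $\Z[t]/(G)\xrightarrow{\ \sim\ }V$ (using $g\in(G)$), under which $\phi_F|_V$ becomes multiplication by $F$ on $\Z[t]/(G)$. As $G$ is monic, this endomorphism has determinant $\prod_{G(\beta)=0}F(\beta)=\mathrm{Res}(F,G)$, whence $\gamma_\rho=[V:\phi_F(V)]=|\det(\phi_F|_V)|=\mathrm{Res}(F,G)$.

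The main obstacle—and the conceptual heart of the argument—is the saturation step $\overline{L}=V=zR$. Naively the order of $\mathrm{coker}(C)$ \emph{should} be the product of the nonzero eigenvalues $\prod_{G(\theta)=0}f(\theta)=\mathrm{Res}(f,G)=\mathrm{Res}(F,G)\,\mathrm{Res}(z,G)$, which overcounts by the factor $\mathrm{Res}(z,G)$; recognising that $zR$ is precisely the saturation of the image (equivalently, that $R/zR$ is torsion-free) is exactly what discards this spurious factor and yields $\mathrm{Res}(F,G)$ rather than $\mathrm{Res}(f,G)$. The only other point needing care is the standard identification, for monic $G$, of the determinant of the multiplication-by-$F$ map on $\Z[t]/(G)$ with $\mathrm{Res}(F,G)$.
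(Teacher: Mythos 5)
Your proof is correct, but there is no in‑paper proof to compare it with: the paper states this result as a quotation of Theorem A and Corollary B of the companion paper \cite{NoferiniWilliamsCompanion} and gives no argument for it, so you have supplied a proof where the paper supplies only a citation. Judged on its own, your module‑theoretic derivation is sound and complete in all essentials: identifying $C$ with multiplication by $f$ on $R=\Z[t]/(t^n-1)$ and diagonalising by the DFT correctly gives $\rho=n-\deg z$ (using that $g$, hence $z$, is squarefree); the observation that $R/zR\cong\Z[t]/(z)$ is $\Z$-free because $z$ is a monic divisor of $g$, so that $V=zR$ is the saturation of $L=fR$, is indeed the crux that replaces the naive $\mathrm{Res}(f,G)$ by $\mathrm{Res}(F,G)$; and the isomorphism $V\cong\Z[t]/(G)$ intertwining $\phi_F|_V$ with multiplication by $F$ modulo the monic polynomial $G$ gives $[V:\phi_F(V)]=|\det(\phi_F|_V)|=\mathrm{Res}(F,G)$, the index being finite because $F$ and $G$ are coprime (a common root would be a root of both $z$ and $G$, impossible since $g=Gz$ is squarefree), which is also what your rank count $\mathrm{rank}(L)=\mathrm{rank}(V)$ encodes. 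Two cosmetic remarks: the paper's convention $\mathrm{Res}(\cdot,\cdot)=|\mathrm{Res}(\cdot,\cdot)|$ absorbs the sign ambiguities in $\prod_i s_i$ and in $\det(\phi_F|_V)$; and whether the circulant with first row $(a_0,\dots,a_{n-1})$ is $f(\Pi)$ or $f(\Pi^{T})$ is immaterial, since transposition preserves the Smith form. The cited source proves a more general statement, for $f$ evaluated at the companion matrix of an arbitrary monic $g$ over a more general coefficient ring; your argument leans on the special features of $g=t^n-1$ (squarefreeness, unitary diagonalisability) but is self-contained and adequate for the theorem as stated here.
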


\begin{corollary}\label{cor:G_n(w)^ab}
Let $G_n(w)$ be a cyclically presented group with representer polynomial $f(t)$, and let $g(t)=t^n-1$. Then $G_n(w)^\mathrm{ab}\cong A_0\oplus \Z^\rho$ where $A_0$ is a finite abelian group of order $\gamma_\rho$, where $\rho$ and $\gamma_\rho$ are as given in Theorem \ref{thm:SNFresultant}.
\end{corollary}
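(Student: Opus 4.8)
The plan is to obtain this as an immediate consequence of Theorem~\ref{thm:SNFresultant} together with the general description, recorded in the introduction, of the abelianisation of a finitely presented group via the Smith form of its relation matrix (following \cite{MKS,JohnsonBook}). The one thing that really needs checking is the identification on which everything rests: that the relation matrix of the cyclic presentation $P_n(w)$ is \emph{exactly} the circulant matrix $C$ associated with $f$. This is immediate, since the $j$-th relator is $w(x_j,x_{j+1},\ldots,x_{j+n-1})$, so the exponent sum of $x_i$ in it is $a_{i-j}$ with subscripts mod $n$; hence the relation matrix has first row $(a_0,a_1,\ldots,a_{n-1})$ and is precisely the circulant $C$ whose Smith form is computed in Theorem~\ref{thm:SNFresultant}.

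With this identification I would apply the general structure theorem. Writing the Smith form of $C$ as $\mathrm{diag}_n(s_1,\ldots,s_\rho,0,\ldots,0)$ with $\rho=\mathrm{rank}(C)$ and each $s_i\neq 0$, the quoted theorem yields
\[
G_n(w)^\mathrm{ab}\cong \Z_{s_1}\oplus\cdots\oplus\Z_{s_\rho}\oplus\Z^{\,n-\rho}.
\]
Putting $A_0=\Z_{s_1}\oplus\cdots\oplus\Z_{s_\rho}$ gives a finite abelian group whose order is the last nonzero determinantal divisor $\gamma_\rho=\prod_{i=1}^\rho s_i$, so that $G_n(w)^\mathrm{ab}\cong A_0\oplus\Z^{\,n-\rho}$ with $|A_0|=\gamma_\rho$. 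It then remains only to substitute the explicit evaluations supplied by Theorem~\ref{thm:SNFresultant}, namely $\rho=n-\deg z(t)$ (so that the free part has rank $n-\rho=\deg z(t)$) and $\gamma_\rho=\mathrm{Res}(F,G)$.

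I expect no genuine obstacle here: all of the analytic content is carried by Theorem~\ref{thm:SNFresultant}, and the corollary is purely an assembly of that result with a standard fact. The only points over which I would pause are bookkeeping. I would confirm that the asserted order $\gamma_\rho$ agrees both with the determinantal-divisor description $|A_0|=|\prod_{i=1}^\rho s_i|$ recorded in the introduction and, in the finite case $\rho=n$ (where $z(t)=1$, so $F=f$, $G=g$ and the free part vanishes), with the resultant formula $|G_n(w)^\mathrm{ab}|=\mathrm{Res}(f,g)$. I would also check the opposite extreme, in which a common root of $f$ and $g$ (necessarily a root of unity) forces $\deg z(t)>0$ and hence a nontrivial free part $\Z^{\,\deg z(t)}$, consistent with $G_n(w)^\mathrm{ab}$ being infinite, and verify that the empty-product conventions for $\gamma_\rho$ behave correctly in the degenerate case $\rho=0$.
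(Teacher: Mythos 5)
Your proposal is correct and is exactly the argument the paper intends: the corollary is stated without a separate proof, being an immediate consequence of Theorem~\ref{thm:SNFresultant} combined with the Smith-form description of abelianisations recalled in the introduction, and your identification of the relation matrix of $P_n(w)$ with the circulant $C$ associated with $f$ is the only point of substance. Do note, however, that your (correct) conclusion is $G_n(w)^\mathrm{ab}\cong A_0\oplus\Z^{\,n-\rho}$ with $n-\rho=\mathrm{deg}\,z(t)$, whereas the statement as printed reads $\Z^{\rho}$; the exponent there must be read as $n-\rho$, as confirmed by the way the corollary is applied later (e.g.\ Corollary~\ref{cor:Hrns}, where the free rank is $d-1=n-\rho$, and the equivalences in Section~\ref{sec:knotgroups}, where $\beta(G_n(w)^\mathrm{ab})=n-\rho$).
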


The following immediate consequence of Corollary \ref{cor:G_n(w)^ab} should be compared to the necessary and sufficient condition for $G_n(w)$ to be perfect, given at (\ref{eq:perfect}):
\begin{alignat}{1}
G_n(w)^\mathrm{ab}~\mathrm{is~free~abelian}\Leftrightarrow \mathrm{Res}(F,G)=1.\label{eq:freeabelian}
\end{alignat}
Since LOG groups have free abelianisation, condition (\ref{eq:freeabelian}) will be an important tool for us.

In Section \ref{sec:OdoniCremona} we recall a problem (Problem \ref{prob:OdoniCremona}) posed by Odoni \cite{Odoni} and Cremona \cite{Cremona}, namely: given a defining word $w$, to determine all values of $n$ for which the corresponding cyclically presented group $G_n(w)$ is perfect (i.e.\,is free abelian of rank~0). We also recall a theorem of theirs (Corollary \ref{cor:OdoniCremonafinitelymany}) which states that for a fixed defining word $w$ whose corresponding representer polynomial has an irreducible, non-constant, non-cyclotomic, factor that is not equal to $\pm t$, there can be at most finitely many values of $n$ for which $G_n(w)$ is perfect. In Problem \ref{prob:FreeAbelian} we generalize this to consider cyclically presented groups whose abelianisation is free abelian (of arbitrary rank) and in Corollary \ref{cor:finitelymanyfreeabelian} we obtain a result that is analogous to, and generalizes, Corollary \ref{cor:OdoniCremonafinitelymany} which (in Theorem \ref{thm:finitelymanyfreeabelianGH}) we later apply to Gilbert-Howie groups.

In Section \ref{sec:Hrns} we consider the groups $H(r,n,s)$. In Theorem \ref{thm:HrnsSNF} and Corollary \ref{cor:Hrns} we obtain information about $H(r,n,s)^\mathrm{ab}$ and in Corollary \ref{cor:HrnsfreeabelianLOG} we extend the classification of groups $H(r,n,s)$ that are connected LOG groups \cite{WilliamsLOG,BainsonChinyere} to classify all groups $H(r,n,s)$ that are LOG groups. In Section \ref{sec:CHR} we turn our attention to the groups of Fibonacci type $G_n(m,k)$ and in Theorem \ref{thm:stronglyyirredCHRnotLOG} we show that if a group $G_n(m,k)$ is a LOG group then it is isomorphic to a Gilbert-Howie group $H(n,m)$, and we consider these groups in Section \ref{sec:GH}. In Theorem \ref{thm:sieradskiasLOG} and Corollary \ref{cor:sieradski} we show that the Sieradski group $S(2,n)=H(n,2)$ is a LOG group if and only if $6|n$. We conjecture (Conjecture \ref{conj:H(n,m)freeabelian}) that the groups $S(2,n)$ with $6|n$ are the only cases when $H(n,m)^\mathrm{ab}$ is free abelian, and hence that these are the only cases when $H(n,m)$ is a LOG group. In support of this conjecture, Theorem \ref{thm:finitelymanyfreeabelianGH} shows that for fixed $m\geq 3$ there are most finitely many $n$ for which $H(n,m)^\mathrm{ab}$ is free abelian. Theorem \ref{thm:GilbertHowieFreeAbelianmod6b} provides further support for the conjecture and Corollary \ref{cor:H(n,m)freeabeliann=6bor12bor24b} proves it when $n=6b,12b$ or $24b$ where $(b,6)=1$. In Section \ref{sec:knotgroups} we consider when a cyclically presented group $G_n(w)$ is a connected LOG group. In Theorem \ref{thm:G^ab=Z} we give necessary conditions on the representer polynomial $f(t)$ for $G_n(w)^\mathrm{ab}$ to be isomorphic to $\Z$ (a necessary condition for $G_n(w)$ to be a connected LOG group) to hold. In Corollaries \ref{cor:positiveword} and \ref{cor:prishchepov} we apply this to cyclically presented groups with positive defining words, and to the Prishchepov groups $P(r,n,k,s,q)$.

\section{Groups $G_n(w)$ with free abelianisation for at most finitely many $n$}\label{sec:OdoniCremona}

A problem from the theory of cyclically presented groups is to determine, given a defining word $w$, the values of $n$ for which the corresponding group $G_n(w)$ is perfect. Using (\ref{eq:perfect}) this translates to the following Diophantine problem:

\begin{problem}[{\cite[Problem B]{Odoni},\cite[Problem B]{Cremona}}]\label{prob:OdoniCremona}
Given $f(t)\in \Z[t]$ and $g(t)=t^n-1$ determine all $n\in \N$ such that $\mathrm{Res}(f,g)=1$.
\end{problem}
The following partial answer was provided in \cite{Odoni,Cremona}:

\begin{theorem}[{\cite[Theorem 1(ii)]{Odoni},\cite[Proposition 1]{Cremona}}]\label{thm:OdoniCremonaFinitelyMany}
Let $f(t)\in \Z[t]$ be a non-constant, irreducible polynomial, that is not cyclotomic, and $f(t)\neq \pm t$, and let $g(t)=t^n-1$. Then there exist at most finitely many integers $n$ for which $\mathrm{Res}(f,g)=1$.
\end{theorem}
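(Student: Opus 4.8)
The plan is to express the resultant $\mathrm{Res}(f,g)$ as a product over the roots of $f$ and to show that if infinitely many $n$ gave resultant~$1$, then the roots of $f$ would be forced to be roots of unity or~$0$, contradicting the hypotheses. Write $f(t)=c\prod_{j=1}^d(t-\alpha_j)$ where $c$ is the leading coefficient and $\alpha_1,\ldots,\alpha_d$ are the (algebraic, generally complex) roots of $f$ counted with multiplicity. Using the standard formula for the resultant in terms of roots, together with $g(t)=t^n-1$, I would write
\begin{alignat*}{1}
\mathrm{Res}(f,g)=|c|^n\prod_{j=1}^d |\alpha_j^n-1|,
\end{alignat*}
valid up to sign (which is irrelevant by our convention that $\mathrm{Res}$ denotes absolute value). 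Since $f$ is irreducible, non-constant, and $f(t)\neq\pm t$, no root $\alpha_j$ is zero, so each factor $|\alpha_j^n-1|$ is meaningful, and the leading coefficient $c$ is a nonzero integer.

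First I would dispose of the leading coefficient. If $|c|\geq 2$ then $\mathrm{Res}(f,g)\geq |c|^n\to\infty$, so the equation $\mathrm{Res}(f,g)=1$ can hold for no $n$ beyond a bounded range; hence I may assume $|c|=1$, i.e. $f$ is (up to sign) monic, and the $\alpha_j$ are algebraic integers. Next I would split the roots by modulus. If some root satisfies $|\alpha_j|>1$, then $|\alpha_j^n-1|\to\infty$ while the other factors are bounded below away from~$0$ in a suitable averaged sense; more robustly, I would invoke the product structure over a full Galois orbit. The cleanest route is to observe that the $\alpha_j$ form a complete set of conjugates (as $f$ is irreducible over $\mathbb{Q}$), so $\prod_j(\alpha_j^n-1)$ is, up to sign, the norm $N(\alpha^n-1)$ of the algebraic integer $\alpha^n-1$ in $\mathbb{Q}(\alpha)$. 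Thus $\mathrm{Res}(f,g)=1$ forces $\alpha^n-1$ to be a unit in the ring of integers of $\mathbb{Q}(\alpha)$ for that $n$.

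The heart of the argument is then to show that $\alpha^n-1$ can be a unit for only finitely many $n$, under the stated hypotheses. The key input is a result on the growth of norms: if $\alpha$ is an algebraic integer that is not a root of unity and not~$0$, then there is a lower bound on $\prod_j|\alpha_j^n-1|$ that either grows or is bounded below by a constant exceeding~$1$ for all large~$n$. If some conjugate has $|\alpha_j|>1$ the product grows like $\prod_{|\alpha_j|>1}|\alpha_j|^n\to\infty$, giving the conclusion immediately. The remaining, and genuinely hardest, case is when \emph{all} conjugates satisfy $|\alpha_j|\le 1$: since $\alpha$ is a nonzero algebraic integer, $\prod_j|\alpha_j|=|N(\alpha)|\geq 1$, forcing every $|\alpha_j|=1$; by Kronecker's theorem this makes $\alpha$ a root of unity, so $f$ would be cyclotomic, contradicting the hypothesis. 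Therefore this degenerate case never arises, and the preceding growth estimate applies.

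The main obstacle I anticipate is making the lower bound on $\prod_j|\alpha_j^n-1|$ fully uniform and quantitative when conjugates lie on or very near the unit circle but $\alpha$ is not a root of unity—one must rule out that $\alpha^n$ approaches~$1$ fast enough to make the norm collapse to~$1$ infinitely often. This is exactly where the non-cyclotomic hypothesis does the work: Kronecker's theorem guarantees that not all conjugates can sit on the unit circle, so at least one has modulus strictly greater than~$1$, and its contribution $|\alpha_j|^n$ dominates. Packaging this into a clean finiteness statement, rather than an asymptotic one, will require a little care with the constants, but conceptually the argument reduces to: unit leading coefficient plus Kronecker forces a root off the unit circle, and that root makes the resultant blow up.
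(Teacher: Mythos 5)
Your overall framework (factor the resultant over the roots of $f$, dispose of the case $|c|\ge 2$, use Kronecker's theorem to rule out all conjugates being unimodular) matches the skeleton of the paper's proof of its quantitative refinement, Theorem \ref{thm:Mahler}, but there is a genuine gap at the decisive step. An irreducible, non-cyclotomic polynomial can perfectly well have \emph{some} conjugates on the unit circle while others lie off it (Salem polynomials, e.g.\ Lehmer's degree-$10$ polynomial, have all but two roots unimodular). For such a unimodular conjugate $\gamma$ that is not a root of unity, the factor $|\gamma^n-1|$ comes arbitrarily close to $0$ along subsequences of $n$, and nothing in your argument bounds \emph{how fast}. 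Your claim that the contribution $|\alpha_j|^n$ of a root outside the unit circle ``dominates'' is exactly the assertion that needs proof: a priori $\prod_k|\gamma_k^n-1|$ could decay faster than $|\alpha_j|^n$ grows for infinitely many $n$, so integrality of the resultant plus Kronecker is not enough. Kronecker's theorem only tells you that not \emph{all} conjugates are unimodular (equivalently, that the Mahler measure exceeds $1$); it says nothing about the rate at which $\gamma^n$ can approach $1$.

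The paper closes this gap with a genuinely nontrivial Diophantine input: Baker's theorem on linear forms in logarithms gives $|n\log\gamma_k|>n^{-C_k}$, whence (via the elementary Lemma \ref{lem:Taylor}) $|\gamma_k^n-1|>n^{-C_k}/2$, so the unit-circle factors decay at worst polynomially while $\mu^n$ grows exponentially, yielding $\mathrm{Res}(h,g)\ge c\mu^n n^{-d}>1$ for all large $n$. You correctly flagged this as ``the main obstacle'' but then dismissed it with an appeal to Kronecker that does not address it; as written, your proof is complete only in the special case where $f$ has no roots on the unit circle.
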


In fact, \cite[Theorem 1(ii)]{Odoni} proves somewhat more, as  its hypotheses also allow for $f$ to be a cyclotomic polynomial $\Phi_m$ for many values of $m$. The following formulation is convenient for applications (compare \cite[Theorem 1]{Cremona}):

\begin{corollary}\label{cor:OdoniCremonafinitelymany}
Let $f(t)\in \Z[t]$ have at least one irreducible factor $h(t)$ that is non-constant, not cyclotomic, and $h(t)\neq \pm t$, and let $g(t)=t^n-1$. Then there exist at most finitely many integers $n$ for which $\mathrm{Res}(f,g)=1$.
\end{corollary}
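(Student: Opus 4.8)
The plan is to reduce the statement about the (possibly reducible) polynomial $f$ to the irreducible case already settled in Theorem \ref{thm:OdoniCremonaFinitelyMany}, by exploiting the multiplicativity of the resultant in its first argument. First I would isolate the hypothesised factor: since $h$ is a non-constant irreducible factor of $f$ in $\Z[t]$, Gauss's lemma lets me write $f=hp$ with $p\in\Z[t]$ (a non-constant irreducible element of $\Z[t]$ is necessarily primitive, so the cofactor stays over $\Z$).

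Next I would record the multiplicativity. Using the product-over-roots expression for the resultant already quoted in the introduction, namely $\Res(f,g)=\bigl|\prod_{\theta^n=1}f(\theta)\bigr|$ with $g(t)=t^n-1$, the factorisation $f=hp$ gives
\[ \Res(f,g)=\Bigl|\prod_{\theta^n=1}h(\theta)\Bigr|\cdot\Bigl|\prod_{\theta^n=1}p(\theta)\Bigr|=\Res(h,g)\cdot\Res(p,g), \]
and, crucially, each of $\Res(h,g)$ and $\Res(p,g)$ is a non-negative integer because $h,p\in\Z[t]$ and $g$ is monic. Consequently, if $\Res(f,g)=1$ then a product of two non-negative integers equals $1$, which forces $\Res(h,g)=1$ (and $\Res(p,g)=1$, though only the former is needed).

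Finally I would invoke the irreducible case. The factor $h$ is by hypothesis non-constant, irreducible, not cyclotomic, and $h\neq\pm t$, so Theorem \ref{thm:OdoniCremonaFinitelyMany} applies and produces a finite set $S$ of integers $n$ for which $\Res(h,g)=1$. The implication above shows that every $n$ with $\Res(f,g)=1$ lies in $S$, whence there are at most finitely many such $n$, as claimed.

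Since all the genuine difficulty is absorbed into Theorem \ref{thm:OdoniCremonaFinitelyMany}, the only points demanding care are the two auxiliary facts used in the reduction: that the cofactor $p$ may be taken in $\Z[t]$ (Gauss's lemma), and that the absolute-value convention for $\Res$ adopted throughout makes both factors non-negative integers, so that a product equal to $1$ really does force each factor to equal $1$. These are the steps I would flag as the main (and only) obstacle, although each is entirely routine; the corollary is essentially a multiplicativity argument layered on top of the cited theorem.
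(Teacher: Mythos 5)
Your proof is correct and is essentially the argument the paper intends: the corollary is stated without proof as an immediate consequence of Theorem \ref{thm:OdoniCremonaFinitelyMany}, and the implicit reduction is exactly your multiplicativity step $\mathrm{Res}(f,g)=\mathrm{Res}(h,g)\cdot\mathrm{Res}(f/h,g)$ with both factors non-negative integers, forcing $\mathrm{Res}(h,g)=1$. The only remark worth making is that your appeal to Gauss's lemma is not even needed if ``irreducible factor in $\Z[t]$'' is read as giving a cofactor already in $\Z[t]$, and the argument is indifferent to whether that cofactor is constant.
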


A generalisation of the problem considered above is to determine, given a defining word $w$, the values of $n$ for which the corresponding group $G_n(w)$ has free abelianisation. Using (\ref{eq:freeabelian}) this translates to the following:

\begin{problem}\label{prob:FreeAbelian}
Given $f(t)\in \Z[t]$ and $g(t)=t^n-1$ determine all $n\in \N$ such that $\mathrm{Res}(F,G)=1$, where $F(t)=f(t)/z(t)$, $G(t)=g(t)/z(t)$, where $z(t)=(f(t),g(t))$.
\end{problem}

In Corollary \ref{cor:finitelymanyfreeabelian} we provide a partial answer to Problem \ref{prob:FreeAbelian} that is analogous to, and generalizes, Corollary \ref{cor:OdoniCremonafinitelymany}. This is a corollary to the following refinement of Theorem \ref{thm:OdoniCremonaFinitelyMany}, whose proof extends the proof of (the corresponding part of) \cite[Theorem 1(ii)]{Odoni}. Recall that, for a nonconstant polynomial $f(t)\in \C[t]$ with leading coefficient $l$ the \em Mahler measure \em $\mathcal{M}(f)$ \cite[p. 271]{BorweinErdelyi} is defined as

\[ \mathcal{M}(f)= |l| \mathop{\prod_{f(\theta)=0,}}_{|\theta|>1}  |\theta|.\]

\begin{theorem}\label{thm:Mahler}
Let $h(t)\in \Z[t]$ be an irreducible polynomial of degree $m\geq 1$, that is not cyclotomic, with $h(t)\neq \pm t$ and let $g(t)=t^n-1$. Then there exist real constants $c,d>0$, depending on $m$ and $h(t)$ but not on $n$, such that the resultant $\mathrm{Res}(h,g)\geq c\mu^n n^{-d}$, where  $\mu=\mathcal{M}(h)>1$. In particular, there are at most finitely many integers for which $\mathrm{Res}(h,g)=1$.
\end{theorem}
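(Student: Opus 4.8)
The plan is to use the product formula for the resultant over the roots of $h$, estimating each factor according to whether the corresponding root lies inside, on, or outside the unit circle. Write $h(t)=l\prod_{i=1}^m(t-\theta_i)$ with roots $\theta_1,\dots,\theta_m\in\C$ and leading coefficient $l$. Since $\Res(\cdot,\cdot)$ denotes absolute value, the standard formula gives
\[ \Res(h,g)=|l|^n\prod_{i=1}^m|\theta_i^n-1|.\]
First I would record that $\mu=\mathcal{M}(h)>1$. If $h$ is non-monic this is immediate, since $\mathcal{M}(h)\geq |l|\geq 2$; if $h$ is monic then, were every $|\theta_i|\leq 1$, Kronecker's theorem would force each root to be $0$ or a root of unity, whence $h=\pm t$ or $h$ is cyclotomic, contrary to hypothesis. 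So in all cases some $|\theta_i|>1$ and $\mu>1$.

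Next I would partition the roots. For $|\theta_i|>1$ I use $|\theta_i^n-1|\geq |\theta_i|^n-1\geq |\theta_i|^n(1-|\theta_i|^{-1})$; multiplying by $|l|^n$, the product of the $|\theta_i|^n$ over the roots with $|\theta_i|>1$ combines with $|l|^n$ to give exactly $\mu^n$, so this part contributes at least $c_1\mu^n$ with $c_1=\prod_{|\theta_i|>1}(1-|\theta_i|^{-1})>0$. For $|\theta_i|<1$ I use $|\theta_i^n-1|\geq 1-|\theta_i|^n\geq 1-|\theta_i|$, giving a positive constant lower bound $c_2=\prod_{|\theta_i|<1}(1-|\theta_i|)$. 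Both of these estimates are elementary.

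The crux is the roots with $|\theta_i|=1$. Such a root is not a root of unity (as $h$ is irreducible and non-cyclotomic), so writing it as $e^{2\pi\mathrm{i}\phi}$ with $\phi$ irrational, we have $|\theta^n-1|=2\sin(\pi\|n\phi\|)\geq 4\|n\phi\|>0$, where $\|\cdot\|$ denotes the distance to the nearest integer. It remains to bound $\|n\phi\|$ from below, and here I would invoke Baker's theorem on linear forms in logarithms: writing $\|n\phi\|=|n\phi-k|$ for the nearest integer $k$, the quantity $2\pi\mathrm{i}(n\phi-k)=n\log\theta-2k\log(-1)$ is a nonzero $\Z$-linear form in the logarithms of the algebraic numbers $\theta$ and $-1$, with integer coefficients of size $O(n)$; Baker's lower bound then yields $\|n\phi\|\geq c' n^{-d'}$ for constants $c',d'>0$ depending only on $h$ and $m$. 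This transcendence input is the main obstacle and the place where the hypotheses genuinely enter, mirroring and extending the corresponding step in Odoni's proof; without it the unit-circle factors could in principle decay faster than any polynomial in $n$.

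Finally, taking the product over all $m$ roots and combining the three contributions gives $\Res(h,g)\geq c\mu^n n^{-d}$ for constants $c,d>0$ independent of $n$, with $d$ a sum of the exponents $d'$ arising from the unit-circle roots and $c$ the product of $c_1$, $c_2$, and the constants $4c'$. For the concluding assertion, since $\mu>1$ the lower bound $c\mu^n n^{-d}\to\infty$ as $n\to\infty$, so $\Res(h,g)>1$ for all sufficiently large $n$, whence $\Res(h,g)=1$ for at most finitely many $n$.
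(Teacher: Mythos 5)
Your proof is correct and follows essentially the same route as the paper's: the same three-way partition of the roots of $h$ by position relative to the unit circle, Kronecker's theorem to obtain $\mu=\mathcal{M}(h)>1$, elementary bounds for the roots strictly inside or outside the circle (with the outside factors absorbing $|l|^n$ to produce $\mu^n$), and Baker's theorem applied to the linear form $n\log\theta-2k\log(-1)$ for the unimodular roots. The only cosmetic difference is that you convert Baker's bound into a lower bound on $|\theta^n-1|$ directly via $|\theta^n-1|=2\sin(\pi\|n\phi\|)\geq 4\|n\phi\|$, whereas the paper routes this step through a separate technical lemma bounding $|e^{z}-1|$ from below.
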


For the proof of Theorem \ref{thm:Mahler} we need the following technical lemma:

\begin{lemma}\label{lem:Taylor}
Let $0\leq \epsilon \leq 1$ and suppose that $z \in \C$ satisfies $|z|>\epsilon$ and $-\pi < \Im (z) \leq \pi$. Then $|e^z-1|> \epsilon /2$.
\end{lemma}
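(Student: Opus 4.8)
The plan is to reduce the claim to a real-variable estimate and then split according to the sign of $\Re z$. Writing $z = x + iy$ with $x = \Re z$ and $y = \Im z \in (-\pi,\pi]$, a direct computation from $e^z - 1 = (e^x\cos y - 1) + i\, e^x \sin y$ gives the exact identity
\[ |e^z - 1|^2 = e^{2x} - 2e^x\cos y + 1 = (e^x-1)^2 + 2e^x(1-\cos y). \]
The only elementary inequality I need on the trigonometric part is $1-\cos y = 2\sin^2(y/2) \geq \frac{2}{\pi^2}y^2$, valid for $|y|\leq \pi$ by Jordan's inequality $|\sin t| \geq \frac{2}{\pi}|t|$ applied with $t = y/2 \in [-\pi/2,\pi/2]$. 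Since $|z|^2 = x^2 + y^2 > \epsilon^2$ and $\epsilon \leq 1$, it suffices to prove $|e^z-1|^2 > \epsilon^2/4$.

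First I would dispose of the case $x \geq 0$. Here $e^x - 1 \geq x \geq 0$, so $(e^x-1)^2 \geq x^2$, and $2e^x(1-\cos y) \geq 2(1-\cos y) \geq \frac{4}{\pi^2}y^2$. Adding these and using $1 \geq 4/\pi^2$ gives
\[ |e^z-1|^2 \geq x^2 + \tfrac{4}{\pi^2}y^2 \geq \tfrac{4}{\pi^2}(x^2+y^2) = \tfrac{4}{\pi^2}|z|^2 > \tfrac{4}{\pi^2}\epsilon^2 > \tfrac14\epsilon^2, \]
since $4/\pi^2 > 1/4$; this even yields the stronger bound $|e^z-1| > \frac{2}{\pi}\epsilon$.

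The case $x < 0$ is the main obstacle, because the factor $e^x < 1$ in front of $1-\cos y$ prevents the imaginary part from being controlled as cleanly. I would first observe that the reverse triangle inequality gives $|e^z-1| \geq 1 - e^x$, so if $x \leq -1$ then $|e^z-1| \geq 1 - e^{-1} > \tfrac12 \geq \tfrac{\epsilon}{2}$ and we are done. For $-1 < x < 0$ I would sub-split according to whether $|x| \geq \epsilon$: if $|x| \geq \epsilon$ then $1 - e^x \geq 1 - e^{-\epsilon} > \epsilon/2$ (the inequality $1 - e^{-\epsilon} > \epsilon/2$ for $\epsilon \in (0,1]$ being checked by a short derivative argument on $1-e^{-\epsilon}-\epsilon/2$), again finishing via the real part alone. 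The remaining region is $-\epsilon < x < 0$, where $|x| < \epsilon$ forces $y^2 > \epsilon^2 - x^2$. Since $(e^x-1)^2 + 2e^x(1-\cos y)$ is increasing in $|y|$ on $[0,\pi]$ and $1-\cos y \geq \frac{2}{\pi^2}y^2$, the worst case is $y^2 = \epsilon^2 - x^2$, and with $u = -x \in (0,\epsilon)$ the whole lemma reduces to the single-variable inequality
\[ R(u) := (1-e^{-u})^2 + \frac{4e^{-u}}{\pi^2}(\epsilon^2 - u^2) \geq \frac{\epsilon^2}{4}, \qquad 0 \leq u \leq \epsilon \leq 1. \]

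I expect verifying this reduced inequality to be the technical heart of the argument. At the endpoints it holds with room to spare: $R(0) = \frac{4}{\pi^2}\epsilon^2 > \frac14\epsilon^2$ and $R(\epsilon) = (1-e^{-\epsilon})^2 > \frac14\epsilon^2$. Neither term dominates uniformly on the interior — the first is small when $u$ is small and the second is small when $u$ is near $\epsilon$ — so the two contributions must be kept together rather than bounded separately (bounding them separately loses the constant and yields only a weaker bound such as $|e^z-1|>\epsilon/3$). I would therefore treat $R$ as a function of $u$ with $\epsilon$ a parameter, locate its minimum on $[0,\epsilon]$ by differentiation, and check that the minimum value stays above $\epsilon^2/4$; the monotonicity of the relevant elementary functions on the bounded range $0 \leq u \leq \epsilon \leq 1$ keeps this verification finite. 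Finally, tracing through the cases shows that every inequality used is strict when $|z| > \epsilon$, giving the strict conclusion $|e^z-1| > \epsilon/2$.
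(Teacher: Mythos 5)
Your reduction is correct as far as it goes: the identity $|e^z-1|^2=(e^x-1)^2+2e^x(1-\cos y)$, the Jordan bound $1-\cos y\ge \tfrac{2}{\pi^2}y^2$, the case $x\ge 0$, the cases $x\le -1$ and $-1<x\le -\epsilon$ via $1-e^{-\epsilon}>\epsilon/2$, and the observation that on the remaining region $-\epsilon<x<0$ the worst case is $y^2=\epsilon^2-x^2$ are all fine. But the argument stops exactly at its hardest point: the single-variable inequality
\[
R(u)=(1-e^{-u})^2+\frac{4e^{-u}}{\pi^2}\bigl(\epsilon^2-u^2\bigr)\;\ge\;\frac{\epsilon^2}{4},\qquad 0\le u\le\epsilon\le 1,
\]
is never proved. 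You check the two endpoints, correctly note that neither term dominates on the interior, and then say you \emph{would} locate the minimum by differentiation and \emph{expect} it to stay above $\epsilon^2/4$. That is a plan, not a proof; since you yourself identify this as ``the technical heart of the argument,'' leaving it as an expectation is a genuine gap. (The endpoint values $R(0)=\tfrac{4}{\pi^2}\epsilon^2$ and $R(\epsilon)=(1-e^{-\epsilon})^2$ give no control over interior minima, and the margin is not large: the true minimum of $R/\epsilon^2$ dips to roughly $0.28$, so a careless intermediate bound can easily fall below $1/4$.)

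The gap is closable, so the approach is not wrong, merely unfinished. For instance, writing $u=s\epsilon$ with $s\in[0,1]$ and using the concavity bound $1-e^{-v}\ge (1-e^{-1})v$ for $v\in[0,1]$ together with $e^{-s\epsilon}\ge e^{-s}$ gives $R(u)/\epsilon^2\ge (1-e^{-1})^2 s^2+\tfrac{4}{\pi^2}e^{-s}(1-s^2)$, a one-variable function of $s$ alone whose minimum (attained near $s\approx 0.5$) exceeds $0.28>1/4$; this can be certified by an elementary derivative computation on $[0,1]$. Until some such verification is actually carried out, the proof of the lemma is incomplete. For comparison, the paper avoids this two-term balancing act by splitting at $|x|=\epsilon/\sqrt2$ instead of at $x=0$, so that in the ``large imaginary part'' case it can discard nothing and instead minimise the exact quadratic $e^{2x}+1-2e^{x}(1-\epsilon^2/2)$ in the variable $e^x$, obtaining the closed-form lower bound $\epsilon^2-\epsilon^4/4$; structurally your argument is a variant of the same real/imaginary dichotomy, but your choice of splitting point is what forces the residual inequality you left unproved.
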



\begin{proof}
Without loss of generality we may assume $\epsilon>0$. Write $z=x+iy$ for $x,y \in \R$ and let $f(x,y)=e^{2x}+1-2e^x \cos(y)$. Then $|e^z-1|=\sqrt{f(x,y)}$ and $f(x,y) \geq (e^x-1)^2$. If $|x| > \epsilon/\sqrt{2}$ then, since $(e^x-1)^2$ is decreasing for $x<0$ and increasing for $x>0$, we have
$|e^z-1| \geq 1 - e^{-\epsilon/\sqrt{2}} > \epsilon/2$. On the other hand if $|x| \leq \epsilon/\sqrt{2}$ then $|y| > \epsilon/\sqrt{2}$ and hence, since $-\pi <y\leq \pi$, we have
$f(x,y) > e^{2x}+1 - 2e^x(1-\epsilon^2/2)$ which (by minimising this function) is bounded below by $\epsilon^2 - \epsilon^4/4$. Hence $|e^z-1| > \epsilon \sqrt{3}/{2}  > \epsilon/2$.
\end{proof}

\begin{proof}[Proof of Theorem \ref{thm:Mahler}]
We first recall a classical result, due to Kronecker,  showing that the assumptions on $h(t)$ imply $\mu=\mathcal{M}(h)>1$. Indeed, suppose not, so $h(t)$ is monic and has no roots outside the unit circle. Then, since the product of the absolute values of the roots is equal to $|h(0)| \geq 1$, all the roots lie on the unit circle. But every unimodular algebraic number whose conjugates over $\Q$ are all unimodular is a root of unity, (see, for example, \cite[Lemma 1.2]{Odoni}) and so we conclude that $h(t)$ is cyclotomic, a contradiction.

Without loss of generality we may assume $n>m$. Let $\alpha_i$ ($i=1,\ldots , m_O$) be the $m_O$ roots of $h$ outside the unit circle, let $\beta_j$ ($j=1,\ldots , m_I$) be the $m_I$ roots of $h$ inside the unit circle, and let $\gamma_k$ ($k=1,\dots,m_C$) be the $m_C$ roots of $h$ on the unit circle, where $m_O+m_I+m_C=m$. Then each $|\alpha_i^{-1}|<1, |\beta_j|<1, |\gamma_k|=1$. Moreover, if $l$ is the leading coefficient of $h$,
\begin{alignat*}{1}
\mathrm{Res}(h,g)
&=  |l|^n \prod_{i=1}^{m_O} |\alpha_i^n-1| \cdot \prod_{j=1}^{m_I} |\beta_j^n-1| \cdot \prod_{k=1}^{m_C} |\gamma_k^n-1|  \\
&= |l|^n\left(\prod_{i=1}^{m_O} |\alpha_i|^n \right) \cdot \prod_{i=1}^{m_O} |1-\alpha_i^{-n}| \cdot \prod_{j=1}^{m_I} |\beta_j^n-1|  \cdot \prod_{k=1}^{m_C} |\gamma_k^n-1|\\
&=\mu^n \cdot \prod_{i=1}^{m_O} |1-\alpha_i^{-n}| \cdot \prod_{j=1}^{m_I} |\beta_j^n-1| \cdot \prod_{k=1}^{m_C} |\gamma_k^n-1|
\intertext{so}
\log (\mathrm{Res}(h,g)) &=
n\log(\mu) + \sum_{i=1}^{m_O} \log |1-\alpha_i^{-n}| + \sum_{j=1}^{m_I} \log|\beta_j^n-1| + \sum_{k=1}^{m_C} \log |\gamma_k^n-1|.
\end{alignat*}
Now, by definition, there exist $r<1$ and $R>1$ such that $|\alpha_i|>R$ and $|\beta_j|<r$ for each $1\leq i\leq m_O$, $1\leq j\leq m_I$. Now define $\delta =\max \{r,R^{-1}\}$. Then
\begin{alignat*}{1}
\log (\mathrm{Res}(h,g))
&\geq n\log(\mu) + \sum_{i=1}^{m_O} \log (1-\delta^n) + \sum_{j=1}^{m_I} \log(1-\delta^n)+ \sum_{k=1}^{m_C} \log |\gamma_k^n-1| \\
&= n\log(\mu) + (m_O+m_I) \log (1-\delta^n)+ \sum_{k=1}^{m_C} \log |\gamma_k^n-1|\\
&\geq n\log(\mu) + m \log (1-\delta^m)+ \sum_{k=1}^{m_C} \log |\gamma_k^n-1|
\end{alignat*}
since $n>m$. To estimate $|\gamma_k^n-1|$ we observe that since $\gamma_k$ is not a root of unity (otherwise $h$ would be cyclotomic) by Baker's theorem \cite[Theorem 3.1]{BakerBook} $|n\log(\gamma_k)|>n^{-C_k}$ where $C_k$ is a constant depending only on $\gamma_k$ and $\log (\cdot)$ denotes the principal branch of the logarithm. Setting $\epsilon=n^{-C_k}$, $z=n\log \gamma_k$ then $0\leq \epsilon\leq 1$ so Lemma \ref{lem:Taylor} implies $|\gamma_k^n-1|> n^{-C_k}/2$ and, denoting $C_\mathrm{max}=\max_{1 \leq k \leq m_C} C_k$ (note that $C_{\mathrm{max}}$ depends on $h(t)$),
\[  \log (\mathrm{Res}(h,g)) \geq n\log(\mu) + m \log (1-\delta^m) - m C_\mathrm{max} \log n - m \log 2 .    \]
Setting $c=[(1-\delta^m)/2]^m$ and $d=m C_\mathrm{max}$ yields the statement.
\end{proof}

\begin{corollary}\label{cor:finitelymanyfreeabelian}
Let $f(t)\in \Z[t]$ have at least one irreducible factor $h(t)$ that is non-constant, not cyclotomic, and $h(t)\neq \pm t$. For all $n$, let $g(t)=t^n-1$, $F(t)=f(t)/z(t)$, $G(t)=g(t)/z(t)$, where $z(t)=(f(t),g(t))$. Then, for any positive integer $k$, there are at most finitely many integers $n$ for which $\mathrm{Res}(F,G) \leq k$.
\end{corollary}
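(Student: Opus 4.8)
The plan is to deduce this from the lower bound of Theorem \ref{thm:Mahler} by controlling how much the passage from $\mathrm{Res}(h,g)$ to $\mathrm{Res}(F,G)$ distorts the resultant. The guiding observation is that $h$ survives as a factor of $F$: since $h$ is irreducible and not cyclotomic, whereas $z=(f,t^n-1)$ divides $t^n-1$ and is therefore a product of distinct cyclotomic polynomials, $h$ cannot divide $z$; as $h\mid f$, the multiplicity of $h$ in $F=f/z$ equals its multiplicity in $f$, so $h\mid F$ in $\Q[t]$. Because $z$ is monic we have $F\in\Z[t]$, and Gauss's Lemma upgrades this to a genuine factorization $F=hF_1$ with $F_1\in\Z[t]$.

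First I would record that $F$ and $G$ are coprime by construction, so all the resultants below are nonzero integers. Multiplicativity of the resultant then gives $\mathrm{Res}(F,G)=\mathrm{Res}(h,G)\,\mathrm{Res}(F_1,G)$, and since $\mathrm{Res}(F_1,G)$ is a nonzero integer (recall our convention that $\mathrm{Res}$ denotes absolute value) we obtain $\mathrm{Res}(F,G)\geq \mathrm{Res}(h,G)$. Next, from $g=Gz$ and multiplicativity in the second argument, $\mathrm{Res}(h,g)=\mathrm{Res}(h,G)\,\mathrm{Res}(h,z)$, whence
\[ \mathrm{Res}(F,G)\ \geq\ \mathrm{Res}(h,G)\ =\ \frac{\mathrm{Res}(h,g)}{\mathrm{Res}(h,z)}. \]

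The crucial point — and the step I expect to carry the real content — is that the correction factor $\mathrm{Res}(h,z)$ remains bounded independently of $n$. Here I would argue that $z$, being the monic gcd of the fixed polynomial $f$ with $t^n-1$, is always one of the finitely many monic divisors of $f$ in $\Q[t]$; hence as $n$ ranges over $\N$, the polynomial $z$ takes only finitely many values. For each such $z$ we have $(h,z)=1$ (again because $h$ is non-cyclotomic while $z$ is a product of cyclotomics), so $\mathrm{Res}(h,z)$ is a nonzero integer, and therefore $M:=\max_n \mathrm{Res}(h,z)$ is a finite constant depending only on $f$ and $h$.

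Finally I would invoke Theorem \ref{thm:Mahler} to get $\mathrm{Res}(h,g)\geq c\mu^n n^{-d}$ with $\mu=\mathcal{M}(h)>1$ and $c,d>0$ independent of $n$. Combining the displayed inequalities yields $\mathrm{Res}(F,G)\geq (c/M)\,\mu^n n^{-d}$, which tends to infinity since $\mu>1$. Consequently, for any fixed positive integer $k$ the inequality $\mathrm{Res}(F,G)\leq k$ can hold for only finitely many $n$, as required. The only genuine obstacles are bookkeeping ones: justifying $h\mid F$ in $\Z[t]$ via Gauss's Lemma, and verifying the finiteness of the set of possible $z$; the analytic heavy lifting is entirely contained in Theorem \ref{thm:Mahler}.
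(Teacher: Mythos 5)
Your proof is correct and follows essentially the same route as the paper's: both isolate the factor $h$ of $F$, bound the cyclotomic correction coming from $z$ by a constant using the fact that $z$ ranges over only finitely many products of cyclotomic factors of $f$, and then invoke Theorem \ref{thm:Mahler}. The only cosmetic difference is that you cancel $\mathrm{Res}(h,z)$ where the paper cancels $\mathrm{Res}(F,z)$; your variant is, if anything, slightly safer, since $(h,z)=1$ guarantees $\mathrm{Res}(h,z)\neq 0$ without any concern about repeated cyclotomic factors of $f$.
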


\begin{proof}
Let $S=\{d\ |\ \Phi_d~\mathrm{divides}~f\}$. Then, for all $n\geq 1$, $z(t)=\prod_{d\in S'} \Phi_d(t)$ for some $S'\subseteq S$. Let $R=\max_{S'\subseteq S} \bigg\{ \mathrm{Res} \left( \frac{f}{\prod_{d\in S'} \Phi_d}, \prod_{d\in S'} \Phi_d \right) \bigg\}$, a constant. Then $\mathrm{Res}(F,z)\leq R$ for all $n\geq 1$.

Noting that $h(t)$ is a factor of $F(t)$, by Theorem \ref{thm:Mahler} there exist constants $c,d>0$ such that
\[   \Res(F,G)=\frac{\Res(F,g)}{\Res(F,z)} \geq \frac{\Res(h,g)}{M} \geq \frac{c \mu^n}{n^d R} > k \]
(where $\mu=\mathcal{M}(h)>1$) for any sufficiently large $n$.
\end{proof}

\section{Generalized Fibonacci groups $H(r,n,s)$ as LOG groups}\label{sec:Hrns}

The representer polynomial of $H(r,n,s)$ is
\[f^{r,s}(t)=1+t+t^2 +\ldots +t^{r-1} -t^r(1+t+t^2+\ldots +t^{s-1}).\]

For $n,r,s\geq 1$ we define $d=(r,n,s)$, $R=r/d, N=n/d, S=s/d$.
In Theorem \ref{thm:HrnsSNF} we calculate the last non-zero determinantal divisor $\gamma_\rho$ of the $n\times n$ circulant matrix associated with $f^{r,s}(t)$ for all $r,n,s$ and in Corollary \ref{cor:Hrns} we relate the abelianisation $H(r,n,s)^\mathrm{ab}$ to $H(R,N,S)^\mathrm{ab}$. Note that by inverting the relators, replacing each generator by its inverse, and negating the subscripts $H(r,n,s)\cong H(s,n,r)$ so we may assume $s\geq r$.

\begin{theorem}\label{thm:HrnsSNF}
  Let $n,r,s,\geq 1$, $d=(r,n,s)$, $R=r/d, N=n/d, S=s/d$ and let
  \[f^{r,s}(t)=1+t+t^2 +\ldots +t^{r-1} -t^r(1+t+t^2+\ldots +t^{s-1}).\]
  \begin{itemize}
    \item[(a)] If $s>r$ then $\rho=n-d+1$ and
    \[ \gamma_\rho =\frac{\mathrm{Res}(f^{R,S}(t),t^N-1)^d}{(S-R)^{d-1}}.\]

    \item[(b)] If $s=r$ then $\rho=n-d$ and $\gamma_\rho=N^{d-1}$.
  \end{itemize}
\end{theorem}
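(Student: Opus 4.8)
The plan is to apply Theorem \ref{thm:SNFresultant}: I must identify $z(t) = (f^{r,s}(t), t^n-1)$, which determines $\rho = n - \deg z$, and then evaluate $\gamma_\rho = \mathrm{Res}(F,G)$ with $F = f^{r,s}/z$ and $G = (t^n-1)/z$. The engine of the whole argument is a single algebraic identity. Clearing denominators in $f^{r,s}(t) = \frac{t^r-1}{t-1} - t^r\frac{t^s-1}{t-1}$ gives $f^{r,s}(t)(t-1) = -(t^{r+s} - 2t^r + 1)$, and since $r = dR$ and $s = dS$ the right-hand side equals $f^{R,S}(t^d)(t^d-1)$. Hence
\[ f^{r,s}(t) = f^{R,S}(t^d)\,\frac{t^d-1}{t-1}. \]
I will write $\sigma_k(u) = (u^k-1)/(u-1) = 1 + u + \cdots + u^{k-1}$ throughout.

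First I would determine $z$ by a root count, using that $t^n-1$ is squarefree so that $z$ is the product of $(t-\theta)$ over the common roots. For a root of unity $\theta \neq 1$, $f^{r,s}(\theta) = 0$ iff $\theta^{r+s} - 2\theta^r + 1 = 0$; rewriting this as $\theta^r(\theta^s - 2) = -1$ and taking moduli, $|\theta^s - 2| = 1$ forces $\theta^s = 1$ and then $\theta^r = 1$. Thus the roots of unity common to $f^{r,s}$ and $t^n-1$ are exactly the $\theta$ with $\theta^r = \theta^s = \theta^n = 1$, i.e. $\theta^d = 1$, together with $\theta = 1$ precisely when $f^{r,s}(1) = r - s = 0$. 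In case (a), $s > r$ excludes $\theta = 1$, so $z = \sigma_d(t)$ and $\rho = n - (d-1) = n - d + 1$; in case (b), $s = r$ includes it, so $z = t^d - 1$ and $\rho = n - d$. This establishes both rank formulas.

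For $\gamma_\rho$ I would substitute the identity into $F = f^{r,s}/z$. In case (a), $z = \sigma_d$ cancels exactly, leaving $F = f^{R,S}(t^d)$ (monic up to sign) and $G = (t-1)(t^n-1)/(t^d-1)$, which is monic. Writing $\gamma_\rho = \prod_{G(\beta)=0}|F(\beta)|$, the root $\beta = 1$ contributes $|f^{R,S}(1)| = S - R$, while the remaining roots form $\mu_n \setminus \mu_d$, on which $\beta \mapsto \beta^d$ is a $d$-to-one surjection onto $\mu_N\setminus\{1\}$; these contribute $\bigl(\prod_{\omega\in\mu_N\setminus\{1\}}|f^{R,S}(\omega)|\bigr)^d = (\mathrm{Res}(f^{R,S}, t^N-1)/(S-R))^d$. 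Multiplying gives $\gamma_\rho = \mathrm{Res}(f^{R,S}, t^N-1)^d/(S-R)^{d-1}$. In case (b) one has $R = S$, so $f^{R,R}(u) = -(u^R-1)^2/(u-1)$ and the identity yields $F = -\sigma_d(t)\,\sigma_R(t^d)^2$ and $G = \sigma_N(t^d)$. By multiplicativity of the resultant, $\gamma_\rho = \mathrm{Res}(\sigma_d, \sigma_N(t^d))\cdot\mathrm{Res}(\sigma_R(t^d), \sigma_N(t^d))^2$. For the first factor, each root $\zeta \in \mu_d\setminus\{1\}$ of $\sigma_d$ satisfies $\zeta^d = 1$, so $\sigma_N(\zeta^d) = \sigma_N(1) = N$, giving $N^{d-1}$; for the second, $(R,N) = 1$ makes $\omega \mapsto \omega^R$ a bijection of $\mu_N\setminus\{1\}$, so together with $\prod_{\omega\in\mu_N\setminus\{1\}}(1-\omega) = N$ one finds $\mathrm{Res}(\sigma_R(t^d), \sigma_N(t^d)) = 1$. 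Hence $\gamma_\rho = N^{d-1}$.

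The conceptual crux is the unimodularity argument pinning down the common roots, since it simultaneously produces the rank and explains the exceptional role of $\theta = 1$ (hence the appearance of the factor $(S-R)^{d-1}$). The step needing most care is the resultant bookkeeping in case (a): correctly isolating the $\beta = 1$ term and verifying that the branched cover $\beta \mapsto \beta^d$ distributes the remaining roots evenly (each fibre of size $d$) over $\mu_N\setminus\{1\}$, so that the product genuinely becomes a perfect $d$-th power divided by the single leftover factor $S - R$.
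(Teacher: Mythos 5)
Your proof is correct, and it reaches the same factorizations $z$, $F$, $G$ (hence the same $\rho$) and the same resultant values as the paper, but by a genuinely different and more self-contained route. The paper simply imports the formulas for $z(t)$, $F(t)$, $G(t)$ and the rank from the proof of Theorem C of \cite{WilliamsLOG}, and then evaluates $\gamma_\rho=\mathrm{Res}(F,G)$ by repeated use of multiplicativity, a substitution $t\mapsto t^d$ to extract the $d$-th power, and, in case (b), Apostol's theorem that $\mathrm{Res}(\Phi_d,\Phi_\delta)=1$ when $(d,\delta)=1$. You instead derive everything from the identity $f^{r,s}(t)(t-1)=f^{R,S}(t^d)(t^d-1)$, pin down $z$ by the unimodularity argument $|\theta^s-2|=1\Rightarrow\theta^s=1$ (which simultaneously explains the rank and the special role of $\theta=1$), and then evaluate the resultants directly as root products, using the $d$-to-one covering $\beta\mapsto\beta^d$ from $\mu_n\setminus\mu_d$ onto $\mu_N\setminus\{1\}$ in case (a) and the bijection $\omega\mapsto\omega^R$ of $\mu_N\setminus\{1\}$ (in place of Apostol) in case (b). What your approach buys is independence from the external reference and from cyclotomic-resultant machinery; what the paper's buys is brevity and consistency with the toolkit it uses elsewhere. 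Two cosmetic points: in case (b) the quantity $\prod_{\omega\in\mu_N\setminus\{1\}}|\sigma_R(\omega)|$ should strictly be raised to the $d$-th power to give $\mathrm{Res}(\sigma_R(t^d),\sigma_N(t^d))$ (harmless, since it equals $1$), and it is worth a sentence that $t^n-1$ being squarefree is what lets you read off the monic gcd $z$ as the product of $t-\theta$ over common roots.
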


\begin{proof}
(a) By \cite[Proof of Theorem C]{WilliamsLOG}
\begin{alignat*}{1}
z(t)&=1+t+t^2+\ldots +t^{d-1},\\
F(t)
&= (1-t^r)(1+t^d+\ldots +t^{(R-1)d})-t^{2r}(1+t^d+\ldots +t^{(S-R-1)d})\\
&= 1+t^d+\ldots +t^{(R-1)d}-t^{dR}(1+t^d+\ldots +t^{(S-1)d}),\\
G(t)&=(1-t)(1+t^d+\ldots +t^{(N-1)d})
\end{alignat*}
therefore (and as shown in \cite[Proof of Theorem C]{WilliamsLOG}) $\rho=n-d+1$. By Theorem \ref{thm:SNFresultant}
\begin{alignat*}{1}
\gamma_\rho
&=\mathrm{Res}(F,G)\\
&=\mathrm{Res}(1+t^d+\ldots +t^{(R-1)d}-t^{dR}(1+t^d+\ldots +t^{(S-1)d}),(1-t))\cdot\\
&\quad \mathrm{Res}(1+t^d+\ldots +t^{(R-1)d}-t^{dR}(1+t^d+\ldots +t^{(S-1)d}),1+t^d+\ldots +t^{(N-1)d})\\
&=(S-R)\cdot \mathrm{Res}(1+t^d+\ldots +t^{(R-1)d}-t^{dR}(1+t^d+\ldots +t^{(S-1)d}),1+t^d+\ldots +t^{(N-1)d})\\
&=(S-R)\cdot \left(\mathrm{Res}(1+t+\ldots +t^{(R-1)}-t^{R}(1+t+\ldots +t^{(S-1)}),1+t+\ldots +t^{(N-1)})\right)^d\\
&=(S-R)\cdot \left(\frac
{\mathrm{Res}(1+t+\ldots +t^{(R-1)}-t^{R}(1+t+\ldots +t^{(S-1)}),t^N-1)}
{\mathrm{Res}(1+t+\ldots +t^{(R-1)}-t^{R}(1+t+\ldots +t^{(S-1)}),t-1)}
\right)^d\\
&=(S-R)\cdot \left(\frac
{\mathrm{Res}(1+t+\ldots +t^{(R-1)}-t^{R}(1+t+\ldots +t^{(S-1)}),t^N-1)}
{S-R}
\right)^d
\end{alignat*}
as required.

\smallskip

(b)
By \cite[Proof of Theorem C]{WilliamsLOG}
\begin{alignat*}{1}
z(t)&=1-t^d,\\
F(t)
&= (1+t+t^2+\ldots +t^{r-1})(1+t^d+\ldots +t^{(R-1)d})\\
&= (1+t+t^2+\ldots +t^{d-1})(1+t^d+\ldots +t^{(R-1)d})^2,\\
G(t)&=(1+t^d+\ldots +t^{(N-1)d}),
\end{alignat*}
therefore (and as shown in \cite[Proof of Theorem C]{WilliamsLOG}) $\rho=n-d$. By Theorem \ref{thm:SNFresultant}
\begin{alignat*}{1}
\gamma_\rho
&=\mathrm{Res}(F,G)\\
&=\mathrm{Res}((1+t+t^2+\ldots +t^{d-1}),(1+t^d+\ldots +t^{(N-1)d}))\cdot\\
&\quad \mathrm{Res}((1+t^d+\ldots +t^{(R-1)d}),(1+t^d+\ldots +t^{(N-1)d}))^2\\
&=N^{d-1}\cdot \mathrm{Res}((1+t+\ldots +t^{(R-1)}),1+t+\ldots +t^{(N-1)})^{2d}.
\end{alignat*}
But
\begin{alignat*}{1}
\mathrm{Res}(1+t+\ldots +t^{(R-1)},1+t+\ldots +t^{(N-1)})
&= \mathrm{Res}\left( \prod_{d|R, d>1} \Phi_d, \prod_{\delta |N, \delta>1} \Phi_\delta \right)\\
&= \prod_{d|R, d>1} \prod_{\delta |N, \delta>1} \mathrm{Res}\left( \Phi_d, \Phi_\delta \right).
\end{alignat*}
Now $(R,N)=1$ so in this last product $(d,\delta)=1$ so each $\mathrm{Res}\left( \Phi_d, \Phi_\delta\right)=1$ by \cite[Theorem 3]{Apostol}, and the result follows.
\end{proof}

\begin{corollary}\label{cor:Hrns}
 Let $n,r,s \geq 1$, $d=(r,n,s)$, $R=r/d, N=n/d, S=s/d$.
 \begin{itemize}
   \item[(a)] If $s\neq r$ then $H(r,n,s)^\mathrm{ab} \cong A_0 \oplus \Z^{d-1}$ where $A_0$ is a finite abelian group of order $|H(R,N,S)^\mathrm{ab}|^d/|S-R|^{d-1}$.
   \item[(b)] If $s=r$ then $H(r,n,s)^\mathrm{ab} \cong A_0 \oplus \Z^{d}$ where $A_0$ is a finite abelian group of order $N^{d-1}$.
 \end{itemize}
\end{corollary}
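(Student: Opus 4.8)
The plan is to obtain both parts as a direct substitution of the data computed in Theorem \ref{thm:HrnsSNF} into the structural description of $G_n(w)^\mathrm{ab}$ provided by Corollary \ref{cor:G_n(w)^ab}. The only step with any content is recognising the resultant occurring in Theorem \ref{thm:HrnsSNF}(a) as the order $|H(R,N,S)^\mathrm{ab}|$. First I would recall that the torsion-free rank of $G_n(w)^\mathrm{ab}$ equals the number of vanishing invariant factors of the associated circulant, namely $n-\rho$, so Corollary \ref{cor:G_n(w)^ab} gives $H(r,n,s)^\mathrm{ab}\cong A_0\oplus\Z^{n-\rho}$ with $A_0$ finite of order $\gamma_\rho$. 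Using the isomorphism $H(r,n,s)\cong H(s,n,r)$ recorded before Theorem \ref{thm:HrnsSNF}, I may assume $s\geq r$, so that the two cases $s>r$ and $s=r$ are exactly parts (a) and (b) of Theorem \ref{thm:HrnsSNF}.

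For part (a) I would read off $\rho=n-d+1$ from Theorem \ref{thm:HrnsSNF}(a), so that $n-\rho=d-1$, giving the free summand $\Z^{d-1}$; and $\gamma_\rho=\mathrm{Res}(f^{R,S}(t),t^N-1)^d/(S-R)^{d-1}$. The main step is to apply Theorem \ref{thm:HrnsSNF}(a) a second time, now to the reduced triple $(R,N,S)$. Since $d=(r,n,s)$ we have $(R,N,S)=1$, so this second application falls into the full-rank case (the associated $N\times N$ circulant has rank $N$); hence $H(R,N,S)^\mathrm{ab}$ is finite and, by Corollary \ref{cor:G_n(w)^ab}, has order $\mathrm{Res}(f^{R,S}(t),t^N-1)$. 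Substituting $\mathrm{Res}(f^{R,S}(t),t^N-1)=|H(R,N,S)^\mathrm{ab}|$ into the formula for $\gamma_\rho$, and replacing $S-R$ by $|S-R|$ to absorb the case $s<r$ handled by the symmetry reduction, yields $|A_0|=|H(R,N,S)^\mathrm{ab}|^d/|S-R|^{d-1}$, as required.

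Part (b) is then immediate: Theorem \ref{thm:HrnsSNF}(b) gives $\rho=n-d$, hence $n-\rho=d$ and the free summand $\Z^{d}$, together with $\gamma_\rho=N^{d-1}$, so that $A_0$ is finite of order $N^{d-1}$ with no further argument.

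The only genuinely delicate point—and the step I expect to be the main, if modest, obstacle—is the identification in part (a) of the resultant with $|H(R,N,S)^\mathrm{ab}|$, which requires knowing that this resultant is nonzero (equivalently that $f^{R,S}$ and $t^N-1$ are coprime). This is exactly what the coprimality $(R,N,S)=1$ secures, by forcing the reduced triple into the full-rank case of Theorem \ref{thm:HrnsSNF}(a); once this is in place the corollary reduces to arithmetic substitution.
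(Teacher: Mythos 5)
Your proposal is correct and follows exactly the route the paper intends: the corollary is stated without proof as an immediate consequence of Theorem \ref{thm:HrnsSNF} together with Corollary \ref{cor:G_n(w)^ab}, and your substitution of $n-\rho$ and $\gamma_\rho$ is precisely that argument. You also correctly handle the one nontrivial point, namely that $(R,N,S)=1$ forces the reduced circulant to have full rank so that $\mathrm{Res}(f^{R,S}(t),t^N-1)$ is nonzero and equals $|H(R,N,S)^\mathrm{ab}|$.
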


The group $H(r,n,s)$ is perfect if and only if $|r-s|=1$ and either $r\equiv 0$ or $s\equiv 0\bmod$ $n$ \cite[Theorem A]{BainsonChinyere}. This classification yields the following corollary, which classifies when $H(r,n,s)^\mathrm{ab}$ is free abelian and when $H(r,n,s)$ is a LOG group (recalling that free groups and knot groups are LOG groups), thus extending \cite[Theorem A]{WilliamsLOG}, \cite[Corollary B]{BainsonChinyere} to the (possibly) disconnected case.

\begin{corollary}\label{cor:HrnsfreeabelianLOG}
 Let $n,r,s\geq 1$, $d=(r,n,s)$, $R=r/d, N=n/d, S=s/d$.
 \begin{itemize}
   \item[(a)] If $s\neq r$ then $H(r,n,s)^\mathrm{ab}$ is free abelian if and only if $|r-s|=d$ and either $r\equiv 0\bmod$ $n$ or $s\equiv 0\bmod$ $n$, in which case $H(r,n,s)$ is free of rank $d-1$.
   \item[(b)] If $s=r$ then $H(r,n,s)^\mathrm{ab}$ is free abelian if and only if either
   \begin{itemize}
     \item[(i)] $n|r$, in which case $H(r,n,s)$ is free of rank~$n$; or
     \item[(ii)] $d=1$, in which case $H(r,n,s)$ is isomorphic to the fundamental group of the $(r,n)$ torus knot.
   \end{itemize}
 \end{itemize}
\end{corollary}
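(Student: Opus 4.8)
The plan is to reduce ``free abelian'' to the vanishing of the finite summand $A_0$ produced by Corollary \ref{cor:Hrns}, and then to solve the resulting arithmetic condition by matching it against the perfection criterion \cite[Theorem A]{BainsonChinyere}. Throughout I use the symmetry $H(r,n,s)\cong H(s,n,r)$ recorded just before Theorem \ref{thm:HrnsSNF}, so that whenever it is convenient I may assume $n\mid r$ rather than $n\mid s$.

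For part (a), Corollary \ref{cor:Hrns}(a) gives $H(r,n,s)^\mathrm{ab}\cong A_0\oplus\Z^{d-1}$, so free abelianness is equivalent to $|A_0|=|H(R,N,S)^\mathrm{ab}|^d/|S-R|^{d-1}=1$. The key simplification is to factor the resultant $|H(R,N,S)^\mathrm{ab}|=\mathrm{Res}(f^{R,S},t^N-1)$ through the linear divisor $t-1$ of $t^N-1$. Since $\mathrm{Res}(f^{R,S},t-1)=|f^{R,S}(1)|=|R-S|=|S-R|$, multiplicativity of the resultant yields $|H(R,N,S)^\mathrm{ab}|=|S-R|\cdot c$ with $c=\mathrm{Res}(f^{R,S},1+t+\cdots+t^{N-1})$ a positive integer. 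Substituting gives the clean identity $|A_0|=|S-R|\,c^{\,d}$, so that $|A_0|=1$ iff $|S-R|=c=1$, iff $|H(R,N,S)^\mathrm{ab}|=1$, i.e. iff $H(R,N,S)$ is perfect. Feeding $(R,N,S)=1$ into \cite[Theorem A]{BainsonChinyere}, this holds iff $|R-S|=1$ and ($N\mid R$ or $N\mid S$), which translates back through $R-S=(r-s)/d$ and $N\mid R\Leftrightarrow n\mid r$, $N\mid S\Leftrightarrow n\mid s$ to exactly the stated condition $|r-s|=d$ and ($n\mid r$ or $n\mid s$).

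It remains to upgrade ``free abelianised'' to ``free of rank $d-1$''. Assuming $n\mid r$ (via the symmetry) and $|s-r|=d$, I substitute $x_{i+r}=x_i$ into the relator $\prod_{j=0}^{r-1}x_{i+j}=\prod_{j=0}^{s-1}x_{i+j+r}$: the product over a full residue system, namely $(x_i\cdots x_{i+n-1})^{r/n}$, is common to both sides and cancels, collapsing each relator to $x_ix_{i+1}\cdots x_{i+d-1}=1$. Hence $H(r,n,s)\cong G_n(x_0x_1\cdots x_{d-1})$. Comparing consecutive relators of this presentation forces $x_i=x_{i+d}$, and since $d\mid n$ the presentation reduces to $d$ generators subject to the single relation $y_0y_1\cdots y_{d-1}=1$, which is free of rank $d-1$ (eliminate $y_0$). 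In particular the perfect cases $d=1$ come out trivial, as they must.

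For part (b), Corollary \ref{cor:Hrns}(b) gives $|A_0|=N^{d-1}$, which equals $1$ iff $N=1$ or $d=1$, i.e. iff $n\mid r$ (case (i)) or $d=1$ (case (ii)): the two listed alternatives. In case (i), $n\mid r$ makes the two sides of each relation identical, so every relator is trivial and $H(r,n,r)$ is free of rank $n=d$. In case (ii), $(r,n)=1$ and $H(r,n,r)^\mathrm{ab}\cong\Z$, and the identification of $H(r,n,r)$ with the fundamental group of the $(r,n)$-torus knot is the one genuinely external input, which I would draw from \cite{WilliamsLOG}. I expect this last step to be the main obstacle: the arithmetic and the freeness in (a) and (b)(i) are elementary once Corollary \ref{cor:Hrns} and the criterion \cite[Theorem A]{BainsonChinyere} are in hand, whereas recognising the cyclic presentation of a torus knot group relies on the prior structural classification rather than on a short self-contained argument.
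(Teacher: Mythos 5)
Your proof is correct, and its logical skeleton is the same as the paper's: reduce free abelianness to the vanishing of the finite summand $A_0$ of Corollary \ref{cor:Hrns}, identify that with perfection of $H(R,N,S)$, and invoke \cite[Theorem A]{BainsonChinyere}. Two points of difference are worth recording. First, the paper simply asserts that ``$H(r,n,s)^\mathrm{ab}$ free abelian implies $H(R,N,S)$ perfect''; a priori $|A_0|=1$ only gives $|H(R,N,S)^\mathrm{ab}|^d=|S-R|^{d-1}$, and your factorisation $\mathrm{Res}(f^{R,S},t^N-1)=\mathrm{Res}(f^{R,S},t-1)\cdot\mathrm{Res}(f^{R,S},\sum_{i<N}t^i)=|S-R|\,c$, giving $|A_0|=|S-R|\,c^d$, is exactly the missing justification; this is a genuine (if small) improvement in completeness. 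Second, for the ``in which case'' statements the paper cites \cite[Lemmata 4 and 11]{WilliamsLOG} (which identify $H(r,n,s)$ with $r\equiv 0$ or $s\equiv 0\bmod n$ as a free product of $\Z_{|r-s|/(n,r-s)}$ and a free group of rank $(n,|r-s|)-1$), whereas you rederive the freeness directly by Tietze moves: collapsing each relator to $x_ix_{i+1}\cdots x_{i+d-1}=1$, deducing $x_i=x_{i+d}$, and reducing to $\pres{y_0,\ldots,y_{d-1}}{y_0\cdots y_{d-1}}$. Your computation is sound (and self-contained where the paper is not), at the cost of a slightly longer argument; both approaches must fall back on the external identification of $H(r,n,r)$ with $(r,n)=1$ as the $(r,n)$ torus knot group, as you correctly anticipate.
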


\begin{proof}
(a) If $H(r,n,s)^\mathrm{ab}$ is free abelian then Corollary \ref{cor:Hrns} implies that $H(R,N,S)$ is perfect. Then by \cite[Theorem A]{BainsonChinyere} $|R-S|=1$ and either $R\equiv 0\bmod$ $N$ or $S\equiv 0\bmod$ $N$. Equivalently, $|r-s|=d$ and $r\equiv 0\bmod$ $n$ or $s\equiv 0\bmod$ $n$. Now if $r\equiv 0\bmod$ $n$ or $s\equiv 0\bmod$ $n$ then by \cite[Lemma 11]{WilliamsLOG} the group $H(r,n,s)$ is isomorphic to the free product of $\Z_{|r-s|/(n,r-s)}$ and the free group of rank $(n,|r-s|)-1$. But $|r-s|/(n,r-s)=|r-s|/d=1$ and $(n,|r-s|)-1=d-1$, so $H(r,n,s)$ is free of rank $d-1$, as required.

(b) All but the `in which case' statements follow immediately from Corollary \ref{cor:Hrns}, and those statements follow from \cite[Lemmata 4 and 11]{WilliamsLOG}.
\end{proof}

\section{Groups of Fibonacci type as LOG groups}\label{sec:CHR}

A group of Fibonacci type $G_n(m,k)$ is called \em irreducible \em if $(n,m,k)=1$, and \em strongly irreducible \em if it is irreducible and $(n,k)>1$, $(n,m-k)>1$. (This definition is essentially the one given in \cite{BardakovVesnin}, though we omit the additional condition that $0<m<k<n$, which is unnecessary for our purposes.) The irreducibility condition prevents $G_n(m,k)$ decomposing as a free product in a canonical way \cite[Lemma 1.2]{BardakovVesnin}; moreover, as shown in \cite[Lemma 1.3]{BardakovVesnin}, if $G_n(m,k)$ is irreducible but not strongly irreducible then it is isomorphic to some Gilbert-Howie group $H(n,m')$. Thus, in considering the class of groups of Fibonacci type it suffices to consider the strongly irreducible groups $G_n(m,k)$ and the Gilbert-Howie groups $H(n,m)$. In this section we show that strongly irreducible groups $G_n(m,k)$ are not LOG groups (Theorem \ref{thm:stronglyyirredCHRnotLOG}) and in Section \ref{sec:GH} we consider Gilbert-Howie groups.

The representer polynomial of $G_n(m,k)$ is the trinomial $f(t)=t^m-t^k+1$. Throughout this section we let $\gamma_\rho$ denote the last non-zero determinantal divisor of the $n\times n$ circulant matrix $C$ associated with $f$. The following result classifies the groups $G_n(m,k)$ with infinite abelianisation and classifies the perfect groups $G_n(m,k)$. This was proved in \cite{Odoni} for the case $k=1$ and extended to the case $k>1$ in \cite{WilliamsCHR,WilliamsIJNT}.

\begin{theorem}[{\cite[Theorem 2]{Odoni},\cite[Theorem 4]{WilliamsCHR},\cite{WilliamsIJNT}}]\label{thm:WilliamsOdoni}
Let $n,m,k\geq 1$ and suppose $(n,m,k)=1$.
\begin{itemize}
  \item[(a)] $\mathrm{Res}(t^m-t^k+1,t^n-1)=0$ if and only if $n\equiv 0\bmod$ $6$ and $m\equiv 2k\bmod$ $6$;
  \item[(b)] $\mathrm{Res}(t^m-t^k+1,t^n-1)=1$ if and only if $(n,6)=1$ and $m\equiv 2k$ or $m\equiv k$ or $k\equiv 0\bmod$ $n$.
\end{itemize}
\end{theorem}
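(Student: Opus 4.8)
The plan is to write $\mathrm{Res}(f,g)=\prod_{d\mid n}\mathrm{Res}(f,\Phi_d)$, where $f(t)=t^m-t^k+1$ and $t^n-1=\prod_{d\mid n}\Phi_d(t)$, and to note that each factor $\mathrm{Res}(f,\Phi_d)=\bigl|N_{\Q(\zeta_d)/\Q}(f(\zeta_d))\bigr|$ is a non-negative integer (here $\zeta_d$ is a primitive $d$th root of unity). Consequently $\mathrm{Res}(f,g)=0$ exactly when some $\Phi_d$ with $d\mid n$ divides $f$, so (a) reduces to locating the cyclotomic factors of the trinomial; and, a product of positive integers being $1$ only if each equals $1$, we have $\mathrm{Res}(f,g)=1$ exactly when $f(\zeta_d)$ is a unit of $\Z[\zeta_d]$ for every $d\mid n$ (and none of them vanish).

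For part (a) I would first show that a root of unity $\zeta$ with $f(\zeta)=0$ forces $\zeta^k-\zeta^m=1$ with $|\zeta^k|=|\zeta^m|=1$; equating real and imaginary parts, the only unit-modulus complex numbers whose difference is $1$ are $e^{\pm i\pi/3}$ and $e^{\pm 2\pi i/3}$, so $\zeta^k=e^{\pm i\pi/3}$ is a primitive sixth root of unity and $\zeta^m=(\zeta^k)^2$. Hence $6\mid\mathrm{ord}(\zeta)\mid n$, and $\zeta^{m-2k}=1$ gives $m\equiv 2k\bmod 6$. Conversely, if $6\mid n$ and $m\equiv 2k\bmod 6$ then the hypothesis $(n,m,k)=1$ forces $\gcd(k,6)=1$: a common prime factor $2$ or $3$ of $k$ would, through $m\equiv 2k\bmod 6$, also divide $m$ and $n$, contradicting $(n,m,k)=1$. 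Then $\zeta=e^{i\pi/3}$ is an $n$th root of unity with $f(\zeta)=\Phi_6(\zeta^k)=0$, so $\mathrm{Res}(f,g)=0$.

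The \emph{if} direction of part (b) I would establish by handling the three families directly. If $m\equiv k\bmod n$ then $f(\theta)=1$ for every $\theta^n=1$, and if $k\equiv 0\bmod n$ then $f(\theta)=\theta^m$ has modulus $1$; in both cases $\prod_{\theta^n=1}|f(\theta)|=1$. If $m\equiv 2k\bmod n$ then $f(\theta)=\Phi_6(\theta^k)$, and as $\theta$ runs over the $n$th roots of unity $\theta^k$ runs $\gcd(n,k)$-to-one over the $e$th roots of unity with $e=n/\gcd(n,k)$, whence $\mathrm{Res}(f,g)=\mathrm{Res}(\Phi_6,t^e-1)^{\gcd(n,k)}$. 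Here $(n,m,k)=1$ forces $\gcd(n,k)=1$, so $e=n$, and the elementary evaluation $\mathrm{Res}(\Phi_6,t^n-1)=2-2\cos(\pi n/3)$ equals $1$ precisely when $(n,6)=1$.

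The genuine obstacle is the converse in (b): proving these are the \emph{only} cases. My plan is to normalise by the substitution $t\mapsto t^\lambda$ with $\gcd(\lambda,n)=1$, which permutes the $n$th roots of unity and hence preserves $\mathrm{Res}(f,g)$ while multiplying the exponents $m,k$ by $\lambda$ modulo $n$. When $\gcd(n,k)=1$ this reduces the problem to $k=1$, i.e.\ to classifying $\mathrm{Res}(t^{m'}-t+1,t^n-1)=1$ with $m'\equiv mk^{-1}\bmod n$; the congruences $m\equiv k$ and $m\equiv 2k$ become $m'\equiv 1$ and $m'\equiv 2$. The remaining case $\gcd(n,k)>1$ is then treated by the gcd-factorisation of Theorem \ref{thm:SNFresultant}, much as in the proof of Theorem \ref{thm:HrnsSNF}. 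The hard kernel is the base case $k=1$: one must show that unless $m'\equiv 1$, or $m'\equiv 2$ with $(n,6)=1$, the resultant exceeds $1$. For this I would remove the cyclotomic factors of $t^{m'}-t+1$, observe that what remains has a non-cyclotomic irreducible factor $h$, and apply Theorem \ref{thm:Mahler} (controlling the bounded cyclotomic contribution as in Corollary \ref{cor:finitelymanyfreeabelian}) to force $\mathrm{Res}(f,g)\geq c\,\mu^n n^{-d}>1$ for all large $n$; the delicate point is to pin down, uniformly in $n$, exactly which cyclotomic factors the trinomial acquires, so as to match the finite list of surviving congruences with the three stated families.
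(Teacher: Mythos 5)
The paper does not prove this theorem; it is imported from Odoni \cite{Odoni} (the case $k=1$) and Williams \cite{WilliamsCHR,WilliamsIJNT} (general $k$), so your attempt can only be measured against those cited proofs. Your part (a) and the ``if'' direction of part (b) are correct and essentially the standard arguments: the observation that two unimodular numbers differing by $1$ must be $e^{\pm i\pi/3}$ and $e^{\pm 2i\pi/3}$, the deduction $\gcd(k,6)=1$ (and $\gcd(n,k)=1$ in the case $m\equiv 2k$) from $(n,m,k)=1$, and the evaluation $\mathrm{Res}(\Phi_6,t^n-1)=2-2\cos(n\pi/3)$ are exactly as in the sources. (One small point: your case analysis implicitly, and correctly, attaches the hypothesis $(n,6)=1$ only to the branch $m\equiv 2k\bmod n$; the branches $m\equiv k$ and $k\equiv 0$ give $f(\theta)=1$ or $|f(\theta)|=1$ unconditionally, which is the intended reading of the statement.)

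The genuine gap is the converse of (b), which is the entire content of the theorem and which you only outline. The reduction $t\mapsto t^\lambda$ for $(n,k)=1$ is indeed Williams's first move, but the proposed engine for the base case $k=1$ --- strip the cyclotomic factors and invoke Theorem \ref{thm:Mahler} --- cannot close the argument. That theorem gives, for a \emph{fixed} irreducible non-cyclotomic $h$, a bound $\mathrm{Res}(h,g)\geq c\mu^n n^{-d}$ with constants depending on $h$; here the exponent $m'$ is a residue modulo $n$, so the trinomial (and hence $h$, $c$, $d$ and the radius $\delta$ in its proof) changes with every $n$, and the bound degenerates as $m'$ grows because the roots of $t^{m'}-t+1$ accumulate on the unit circle, forcing $\delta\to 1$. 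Even for fixed $m'$ the conclusion is only ``at most finitely many exceptional $n$'', with constants coming from Baker's theorem that one cannot realistically enumerate, whereas a classification must identify the exceptions exactly. Odoni's actual proof of the $k=1$ case is arithmetic rather than asymptotic: it combines the Selmer--Ljunggren--Tverberg factorization of the trinomial (irreducible, or $\Phi_6$ times an irreducible factor with no unimodular roots) with explicit evaluations of $\mathrm{Res}(f,\Phi_\delta)$ for divisors $\delta$ of $n$ --- the same device this paper reuses in Lemma \ref{lem:Gn(m,k)3cases} and Theorem \ref{thm:GilbertHowieFreeAbelianmod6b} --- to force the stated congruences; Williams then adds the $t\mapsto t^\lambda$ reduction and a separate treatment of $(n,k)>1$. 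Your sketch names the right reduction but substitutes, for the decisive arithmetic step, a tool that cannot deliver it.
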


By part (a) $\beta(G_n(m,k)^\mathrm{ab})>0$ if and only if $n\equiv 0\bmod$ $6$ and $m\equiv 2k\bmod$ $6$. We now show that $\beta(H(n,m)^\mathrm{ab})=2$ in these cases (so, in particular, $G_n(m,k)$ is not a connected LOG group).

\begin{lemma}\label{lem:CHRrank}
Suppose $n \geq 1$, $0\leq m,k<n$, $(n,m,k)=1$, let $f(t)=t^m-t^k+1$, $g(t)=t^n-1$ and suppose $n\equiv 0\bmod$ $6$ and $m\equiv 2k\bmod$ $6$. Then $z(t)=(f(t),g(t))=\Phi_6(t)$, and $F(t)=f(t)/\Phi_6(t)$ has no root of modulus 1, and hence $\rho=n-2$ so $\beta(H(n,m)^\mathrm{ab})=2$.
\end{lemma}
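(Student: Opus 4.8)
The plan is to compute $z(t)=(f(t),g(t))$ exactly, show $\deg z=2$, and then read off $\rho$ and $\beta$ from Theorem~\ref{thm:SNFresultant}. I would begin by recording the arithmetic content of the hypotheses. Reducing $m\equiv 2k\pmod 6$ modulo $2$ and modulo $3$ shows $m$ is even and $m\equiv 2k\pmod 3$; since $6\mid n$ and $(n,m,k)=1$, this forces $\gcd(k,6)=1$ (if $2\mid k$ then $2\mid(n,m,k)$, and if $3\mid k$ then $3\mid m$ and hence $3\mid(n,m,k)$) and also $3\nmid m$. The coprimality $\gcd(k,6)=1$ is the fact that drives everything else.

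For the inclusion $\Phi_6\mid z$, note $\Phi_6\mid g$ because $6\mid n$, and that for a primitive sixth root of unity $\zeta$ one has $f(\zeta)=\zeta^{2k}-\zeta^{k}+1=\Phi_6(\zeta^{k})$ using $\zeta^6=1$ and $m\equiv 2k\pmod 6$; since $\gcd(k,6)=1$, $\zeta^{k}$ is again a primitive sixth root of unity, so $f(\zeta)=0$ and $\Phi_6\mid f$. For the reverse inclusion, let $\zeta$ be any common root of $f$ and $g$, necessarily an $n$-th root of unity of some order $e\mid n$. From $f(\zeta)=0$ with $|\zeta|=1$ the two points $\zeta^{k}$ and $\zeta^{k}-1=\zeta^{m}$ both lie on the unit circle, and the only such points are the primitive sixth roots of unity; hence $\zeta^{k}$ has order $6$ and $\zeta^{m}$ has order $3$, i.e.\ $\gcd(e,k)=e/6$ and $\gcd(e,m)=e/3$. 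As $\gcd(k,6)=1$, the number $u:=\gcd(e,k)=e/6$ is coprime to $6$, divides $k$, and satisfies $e=6u$; moreover $\gcd(e,m)=2u$ divides $m$, so $u\mid m$, and $u\mid e\mid n$. Thus $u\mid(n,m,k)=1$, giving $e=6$. Since $g$ is squarefree this proves $z=\Phi_6$ and $\deg z=2$.

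The second assertion, that $F=f/\Phi_6$ has no root of modulus $1$, I expect to be the main obstacle. The same unit-circle dichotomy shows that any root $\theta$ of $f$ with $|\theta|=1$ has $\theta^{k}$ a primitive sixth root of unity, so $\theta^{6k}=1$ and $\theta$ is a root of unity; running the order computation above yields $\theta$ of order $e=6u$ with $u\mid k$ and $u\mid m$. The difficulty is that $\theta$ need not be an $n$-th root of unity, so $(n,m,k)=1$ is no longer available to force $u=1$: one has only $e\mid\gcd(6k,\,m-2k)$, and excluding unimodular roots of order $6u$ with $u>1$ is the delicate point. In the Gilbert--Howie case $k=1$---exactly the situation covered by the conclusion about $H(n,m)$---this is immediate, since $u\mid k=1$ forces $e=6$, so every unimodular root of $f$ is a primitive sixth root of unity, hence a root of $\Phi_6$ and not of $F$.

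Finally, with $\deg z=2$ Theorem~\ref{thm:SNFresultant} gives $\rho=\mathrm{rank}(C)=n-\deg z=n-2$, and therefore the Betti number is $\beta(H(n,m)^{\mathrm{ab}})=n-\rho=2$.
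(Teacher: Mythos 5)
Your computation of $z(t)$ is correct, complete, and genuinely different from the paper's: the paper disposes of the entire lemma in two lines by citing Selmer's trinomial theorem, which is invoked to assert that $f(t)=(t^2-t+1)F(t)$ with $F$ having no roots of modulus $1$, whence $(f,g)=(\Phi_6,g)=\Phi_6$. You instead classify the common roots of $f$ and $g$ directly from the geometric fact that $|w|=|w-1|=1$ forces $w$ to be a primitive sixth root of unity, and then use $(n,m,k)=1$ to pin the order of a common root down to exactly $6$. This buys something real: it proves $z=\Phi_6$, hence $\rho=n-2$ and $\beta=2$, with no appeal to the irreducibility literature, and --- unlike the paper's route --- it does not pass through the assertion about unimodular roots of $F$ at all.

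That matters because the step you flagged as the delicate point is not merely delicate: the assertion that $F=f/\Phi_6$ has no root of modulus $1$ is false in the generality stated. Take $(n,m,k)=(18,14,7)$: every hypothesis of the lemma holds, yet $f(t)=t^{14}-t^7+1=\Phi_6(t^7)=\Phi_6(t)\,\Phi_{42}(t)$, so $F=\Phi_{42}$ has all twelve of its roots on the unit circle. (Your own order computation predicts exactly this: unimodular roots of order $e=6u$ with $u\mid \gcd(m,k)=7$.) The cited Theorem 3 of Selmer covers the case $k=1$, which is all that the conclusion about $H(n,m)$ requires and all that the later results (Lemma \ref{lem:Gn(m,k)3cases}, Theorem \ref{thm:stronglyyirredCHRnotLOG}, Lemma \ref{lem:GilbertHowieFreeAbelian(m,n)=2}, Theorem \ref{thm:GilbertHowieFreeAbelianmod6b}) ever use --- they only invoke $z=\Phi_6$ --- but it does not extend to $\gcd(m,k)>1$. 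So what you diagnosed as a gap in your own argument is in fact an error in the statement. One small finishing touch for your $k=1$ case: you should observe that the primitive sixth roots of unity are simple roots of $f$ (e.g.\ $f'(\zeta)=m\zeta^{m-1}-1\neq 0$ since $m\geq 2$), so that they do not survive into $F$; with that added, your proof establishes everything the paper actually needs from this lemma, and does so for the $z=\Phi_6$ claim in greater generality than the paper's own argument.
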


\begin{proof}
By \cite[Theorem 3]{Selmer} the polynomial $f(t)=(t^2-t+1)F(t)$ where $F(t)$ has no roots of modulus 1 (see also \cite[Theorem 3]{Ljunggren} or \cite{Tverberg}). Therefore $z(t)=(f(t),g(t))=(t^{2}-t+1,t^n-1)=\Phi_6(t)$, which is of degree 2, and the result follows.
\end{proof}

We require the following lemma, which relies on Theorem \ref{thm:WilliamsOdoni}(b) in an essential way.

\begin{lemma}\label{lem:Gn(m,k)3cases}
Let $n \geq 1$, $0\leq m,k<n$, $(n,m,k)=1$, $m\equiv 2k\bmod$ $6$, $n=ab$ where $a=2^r3^s$, $r,s\geq 1$, $(b,6)=1$. If $\gamma_\rho=1$ then $\mathrm{Res}(f,\Phi_d)=\mathrm{Res}(\Phi_6,\Phi_d)$ for all $d|n$, $d\neq 6$ and ($m\equiv k\bmod$ $b$ or $m\equiv 2k\bmod$ $b$ or $k\equiv 0\bmod$ $b$).
\end{lemma}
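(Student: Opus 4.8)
The plan is to factor $\gamma_\rho$ over cyclotomic polynomials, force each factor to equal $1$, and then recombine the factors indexed by divisors of $b$ so as to invoke Theorem~\ref{thm:WilliamsOdoni}(b) ``at level $b$''. Since $6\mid a\mid n$ we have $n\equiv 0\bmod 6$, so Lemma~\ref{lem:CHRrank} applies: $z(t)=\Phi_6(t)$, the polynomial $F(t)=f(t)/\Phi_6(t)$ (which lies in $\Z[t]$ since $\Phi_6$ is monic) has no root of modulus $1$, and $\rho=n-2$. Hence $G(t)=g(t)/z(t)=\prod_{d\mid n,\,d\neq 6}\Phi_d(t)$, and Theorem~\ref{thm:SNFresultant} gives
\[
\gamma_\rho=\mathrm{Res}(F,G)=\prod_{d\mid n,\,d\neq 6}\mathrm{Res}(F,\Phi_d).
\]
Each factor is a strictly positive integer, because the roots of $\Phi_d$ are unimodular whereas $F$ has none, so $F$ and $\Phi_d$ share no root and $\mathrm{Res}(F,\Phi_d)\neq 0$. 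Therefore $\gamma_\rho=1$ forces $\mathrm{Res}(F,\Phi_d)=1$ for every $d\mid n$ with $d\neq 6$. The first assertion is then immediate from multiplicativity: writing $f=\Phi_6F$ gives $\mathrm{Res}(f,\Phi_d)=\mathrm{Res}(\Phi_6,\Phi_d)\,\mathrm{Res}(F,\Phi_d)=\mathrm{Res}(\Phi_6,\Phi_d)$ for all such $d$.

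For the second assertion, observe that $(b,6)=1$ means no divisor of $b$ equals $6$, so multiplying the previous identity over all $d\mid b$ yields
\[
\mathrm{Res}(f,t^b-1)=\prod_{d\mid b}\mathrm{Res}(f,\Phi_d)=\prod_{d\mid b}\mathrm{Res}(\Phi_6,\Phi_d)=\mathrm{Res}(\Phi_6,t^b-1).
\]
The right-hand side is $\mathrm{Res}(t^2-t+1,t^b-1)$, which equals $1$ by Theorem~\ref{thm:WilliamsOdoni}(b) applied with $(m,k,n)=(2,1,b)$ (indeed $(b,2,1)=1$, $(b,6)=1$, and $2\equiv 2\cdot 1\bmod b$). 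Thus $\mathrm{Res}(t^m-t^k+1,t^b-1)=1$. Since $b\mid n$ we have $(b,m,k)\mid(n,m,k)=1$, and the standing hypotheses force $m,k\geq 1$ (if $k=0$ then $6\mid m$, and if $m=0$ then $3\mid k$, either of which contradicts $(n,m,k)=1$ because $6\mid n$). Hence Theorem~\ref{thm:WilliamsOdoni}(b) applies once more, now with $n$ replaced by $b$, and delivers $m\equiv 2k$ or $m\equiv k$ or $k\equiv 0\bmod b$, as required.

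The only genuinely delicate point is the passage from $\gamma_\rho=1$ to $\mathrm{Res}(F,\Phi_d)=1$ for every individual $d$: this is legitimate precisely because each cyclotomic factor is a positive integer, which is exactly what Lemma~\ref{lem:CHRrank}'s conclusion that $F$ has no root on the unit circle buys us. Everything else is bookkeeping with multiplicativity of the resultant over cyclotomic factorisations together with a second, ``level-$b$'', application of the classification in Theorem~\ref{thm:WilliamsOdoni}(b).
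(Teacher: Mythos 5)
Your proof is correct and follows essentially the same route as the paper's: deduce $\mathrm{Res}(F,\Phi_d)=1$ for each $d\mid n$, $d\neq 6$ from $\gamma_\rho=\mathrm{Res}(F,G)=1$, recombine the factors with $d\mid b$ to get $\mathrm{Res}(f,t^b-1)=1$, and invoke Theorem \ref{thm:WilliamsOdoni}(b) at level $b$. The only (harmless) cosmetic difference is that you establish $\mathrm{Res}(\Phi_6,t^b-1)=1$ by a second appeal to Theorem \ref{thm:WilliamsOdoni}(b) with $(m,k,n)=(2,1,b)$, where the paper cites Apostol's theorem on resultants of cyclotomic polynomials with coprime indices.
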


\begin{proof}
Suppose $\gamma_\rho=1$. By Lemma \ref{lem:CHRrank} $z(t)=\Phi_6(t)$, so $G(t)=\prod_{d|n, d\neq 6} \Phi_d(t)$ and by Theorem \ref{thm:SNFresultant} $\mathrm{Res}(F,G)=1$. Therefore $\mathrm{Res}(F,\Phi_d)=1$ for all $d|n$, $d\neq 6$, and thus $\mathrm{Res}(f,\Phi_d)=\mathrm{Res}(\Phi_6,\Phi_d)\cdot \mathrm{Res}(F,\Phi_d) =\mathrm{Res}(\Phi_6,\Phi_d)$ for all $d|n$, $d\neq 6$. If $b=1$ then $k\equiv 0\bmod$ $b$, so assume $b>1$.

Now
\[t^b-1 =\prod_{d|b}\Phi_d(t) | \prod_{d|ab,d\neq 6} \Phi_d(t) =G(t)\]
so
\[\mathrm{Res}(F(t),t^b-1) | \mathrm{Res}(F,G)= 1.\]
Also
\begin{alignat*}{1}
\mathrm{Res}(f(t),t^b-1)
&= \mathrm{Res}(\Phi_6(t),t^b-1)\cdot \mathrm{Res}(F(t),t^b-1)\\
&= \mathrm{Res} (\Phi_6,\prod_{d|b} \Phi_d) \cdot \mathrm{Res}(F(t),t^b-1)\\
&= \left(\prod_{d|b} \mathrm{Res} (\Phi_6,\Phi_d) \right) \cdot \mathrm{Res}(F(t),t^b-1).
\end{alignat*}
If $d|b$ then $(d,6)=1$ so $\mathrm{Res} (\Phi_6,\Phi_d)=1$ by \cite[Theorem 3]{Apostol} so $\mathrm{Res}(f(t),t^b-1)=\mathrm{Res}(F(t),t^b-1)=1$ so by Theorem \ref{thm:WilliamsOdoni}(b) $m\equiv k$ or $m\equiv 2k$ or $k\equiv 0\bmod$ $b$.
\end{proof}

Lemma \ref{lem:Gn(m,k)3cases} allows us to prove that strongly irreducible groups $G_n(m,k)$ are not LOG groups:

\begin{theorem}\label{thm:stronglyyirredCHRnotLOG}
Let $n \geq 1$, $0\leq m,k<n$, $(n,m,k)=1$, $(n,k)>1$, $(n,m-k)>1$. Then $G_n(m,k)^\mathrm{ab} \not \cong \Z^2$, and hence $G_n(m,k)$ is not a LOG group.
\end{theorem}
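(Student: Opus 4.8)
The plan is to argue by contradiction: suppose $G_n(m,k)^{\mathrm{ab}} \cong \Z^2$. Since $\Z^2$ is infinite we have $\mathrm{Res}(f,g) = 0$, so by Theorem \ref{thm:WilliamsOdoni}(a) we must have $n \equiv 0 \bmod 6$ and $m \equiv 2k \bmod 6$; moreover, the absence of torsion forces $\gamma_\rho = \mathrm{Res}(F,G) = 1$ by Corollary \ref{cor:G_n(w)^ab}. Writing $n = ab$ with $a = 2^r 3^s$ (where $r,s \geq 1$, since $6 \mid n$) and $(b,6) = 1$, I can then apply Lemma \ref{lem:Gn(m,k)3cases}, which supplies two ingredients: first, $\mathrm{Res}(f, \Phi_d) = \mathrm{Res}(\Phi_6, \Phi_d)$ for every $d \mid n$ with $d \neq 6$; and second, one of the three congruences $m \equiv k$, $m \equiv 2k$, or $k \equiv 0$ holds modulo $b$. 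The aim is to play these against the strong irreducibility hypotheses $(n,k) > 1$ and $(n,m-k) > 1$.

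The key preliminary step is to transfer the gcd conditions from $n$ to its $6$-coprime part $b$, and for this I would first show $(k,6) = 1$. Taking $d = 2$ in the resultant identity and using that $m$ is even gives $\mathrm{Res}(f, \Phi_2) = |f(-1)| = |2 - (-1)^k|$, which must equal $\mathrm{Res}(\Phi_6, \Phi_2) = 3$; hence $k$ is odd. Taking $d = 3$ and using $m \equiv 2k \bmod 3$ gives $\mathrm{Res}(f, \Phi_3) = 1$ if $3 \mid k$ and $\mathrm{Res}(f,\Phi_3) = 4$ otherwise, which must equal $\mathrm{Res}(\Phi_6, \Phi_3) = 4$; hence $3 \nmid k$. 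Therefore $(k,6) = 1$, and since $m - k \equiv k \bmod 6$ this also gives $(m-k,6) = 1$. Consequently $(n,k) = (b,k) > 1$ and $(n,m-k) = (b,m-k) > 1$; in particular $b > 1$.

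It then remains to eliminate each of the three congruences modulo $b$. In every case I would extract a prime $q$ dividing $b$ together with two of the quantities $m, k, m-k$, and deduce $q \mid (n,m,k)$, contradicting irreducibility. Concretely: if $m \equiv k \bmod b$ or $m \equiv 2k \bmod b$, choose $q \mid (b,k)$; then $q$ divides $b$, hence divides $m - k$ or $m - 2k$ respectively, and $q \mid k$, forcing $q \mid m$. If instead $k \equiv 0 \bmod b$, choose $q \mid (b,m-k)$; then $q \mid k$ and $q \mid m - k$, again forcing $q \mid m$. In each case $q \mid (n,m,k) = 1$, the desired contradiction, so $G_n(m,k)^{\mathrm{ab}} \not\cong \Z^2$. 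To conclude that $G_n(m,k)$ is not a LOG group, I would use that a LOG group has free abelian abelianisation: by Theorem \ref{thm:WilliamsOdoni}(a) and Lemma \ref{lem:CHRrank} the Betti number $\beta(G_n(m,k)^{\mathrm{ab}})$ equals $2$ when $6 \mid n$ and $m \equiv 2k \bmod 6$ and equals $0$ otherwise, while a short check via Theorem \ref{thm:WilliamsOdoni}(b) shows no strongly irreducible $G_n(m,k)$ is perfect; hence free abelianisation would force $G_n(m,k)^{\mathrm{ab}} \cong \Z^2$, which has just been excluded.

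I expect the main obstacle to be precisely the $(k,6) = 1$ reduction. Strong irreducibility constrains $n$, whereas Lemma \ref{lem:Gn(m,k)3cases} only delivers congruences modulo the $6$-coprime factor $b$, so one genuinely needs the small-modulus resultant computations at $\Phi_2$ and $\Phi_3$ to funnel every common factor of $(n,k)$ and of $(n,m-k)$ into $b$. Once that bridge is established, the three-case argument against the congruences modulo $b$ is routine bookkeeping.
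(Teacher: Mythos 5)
Your proposal is correct and takes essentially the same route as the paper's proof: assume $G_n(m,k)^\mathrm{ab}\cong\Z^2$, invoke Theorem \ref{thm:WilliamsOdoni}(a) and Lemma \ref{lem:Gn(m,k)3cases}, show $(k,6)=(m-k,6)=1$ so that $(n,k)>1$ and $(n,m-k)>1$ supply primes $\geq 5$ dividing $b$, and eliminate each of the three congruences modulo $b$ by forcing such a prime to divide $(n,m,k)=1$. The only (immaterial) difference is that you derive $(k,6)=1$ from the resultant identities at $\Phi_2$ and $\Phi_3$, whereas the paper obtains it directly from the parity of $n$ and $m$ and from $m\equiv 2k\bmod 6$ together with $(n,m,k)=1$.
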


\begin{proof}
Suppose for contradiction that $G_n(m,k)^\mathrm{ab}\cong \Z^2$. Then by Theorem \ref{thm:WilliamsOdoni}(a) $m\equiv 2k\bmod$ $6$ and $n=2^r3^sb$ for some $r,s\geq 1$ and $(b,6)=1$, and by Lemma \ref{lem:Gn(m,k)3cases} $m\equiv k\bmod$ $b$ or $m\equiv 2k\bmod$ $b$ or $k\equiv 0\bmod$ $b$. In particular, $n,m$ are even, so since $(n,m,k)=1$, $k$ is odd. If $3|k$ then $m\equiv 2k\bmod$ $6$ implies $3|(n,m,k)=1$, a contradiction. Thus $(k,6)=1$. In the same way $m-k$ is odd and $3$ does not divide $m-k$. Since $(n,k)>1$ there is a prime divisor $p\geq 5$ of $(n,k)$ and since $(n,m-k)>1$ there is a prime divisor $q\geq 5$ of $(n,m-k)$. If $m\equiv k\bmod$ $b$ or $m\equiv 2k\bmod$ $b$ then $p|m$, so $p|(n,m,k)=1$, a contradiction. If $k\equiv 0\bmod$ $b$ then $q|k$ so $q|(n,m-k,k)=(n,m,k)=1$, a contradiction.
\end{proof}

\section{Gilbert-Howie groups as LOG groups}\label{sec:GH}

By Theorem \ref{thm:stronglyyirredCHRnotLOG} if $G_n(m',k)$ is a LOG group, then it is isomorphic to some Gilbert-Howie group $H(n,m)$ so it remains to consider these groups, whose representer polynomials are the trinomials $f(t)=t^m-t+1$. Throughout this section we let $\gamma_\rho$ denote the last non-zero determinantal divisor of the $n\times n$ circulant matrix $C$ associated with $f$.
This class of groups contains the Sieradski groups $S(2,n)=H(n,2)$, which have free abelianisation if and only if $n\equiv 0\bmod$ $6$, in which case $S(2,n)^\mathrm{ab}\cong \Z^2$ (see \cite[page 236]{JohnsonOdoni} or \cite[Lemma 9]{COS}). We show that in this case $S(2,n)$ is a LOG group.

\begin{theorem}\label{thm:sieradskiasLOG}
The Sieradski group $S(2,6l)$ ($l\geq 1$) is a LOG group with LOG presentation
\[ S(2,6l)= \onetwopres{a_i,b_i\ (0\leq i<2l)}
{a_{2j+1}=b_{2j}^{-1}a_{2j}b_{2j}, a_{2j+1}=b_{2j+1}^{-1}a_{2j+2}b_{2j+1},}
{b_{2j}=a_{2j+1}^{-1}b_{2j+1}a_{2j+1}, b_{2j+2}=a_{2j+2}^{-1}b_{2j+1}a_{2j+2}\ (0\leq j<l)}\]
and where the corresponding LOG has two components.
\end{theorem}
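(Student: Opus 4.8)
The plan is to show that the cyclic presentation of $S(2,6l)$ is Tietze-equivalent to the stated presentation, which is manifestly a LOG presentation since each of its four relator families has the form $\tau^{-1}\lambda^{-1}\iota\lambda$ with $\iota,\tau,\lambda$ generators. Writing the Sieradski relators as $x_ix_{i+2}x_{i+1}^{-1}$, the defining relations read $x_{i+1}=x_ix_{i+2}$ (indices mod $6l$). First I would use the $2l$ relations with $i\equiv 0\pmod 3$, namely $x_{6j}x_{6j+2}=x_{6j+1}$ and $x_{6j+3}x_{6j+5}=x_{6j+4}$ ($0\le j<l$), to eliminate the $2l$ generators $x_{6j+2}=x_{6j}^{-1}x_{6j+1}$ and $x_{6j+5}=x_{6j+3}^{-1}x_{6j+4}$. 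This leaves $4l$ generators $x_{6j},x_{6j+1},x_{6j+3},x_{6j+4}$ and the $4l$ relations with $i\equiv 1,2\pmod 3$.

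The key idea is the relabelling
\[ a_{2j}=x_{6j},\quad a_{2j+1}=x_{6j+3}^{-1},\quad b_{2j}=x_{6j+1},\quad b_{2j+1}=x_{6j+4}^{-1}, \]
in which the odd-indexed new generators are inverted. Without these inversions the surviving relations express each $a_{k+1}$ as a conjugate of $a_k^{-1}$ (and similarly for $b$), which is not of LOG form; the alternating inversion is exactly what converts inverse-conjugation into genuine conjugation. With it, I would substitute the eliminated generators (now $x_{6j+2}=a_{2j}^{-1}b_{2j}$ and $x_{6j+5}=a_{2j+1}b_{2j+1}^{-1}$) into the four surviving families and check that the relations $i=6j+1$ and $i=6j+4$ become exactly relations (1) and (2), while the relations $i=6j+2$ and $i=6j+5$, after using (1) and (2) respectively to rewrite one occurrence of $a_{2j+1}$, become exactly relations (3) and (4). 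Indices are read mod $2l$ on the $a$'s and $b$'s, matching $x_{6l}=x_0$ and $x_{6l+1}=x_1$, so the cyclic wrap-around is consistent.

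Finally, to read off the number of components I would note that relations (1),(2) contribute edges joining $a_{2j}$ to $a_{2j+1}$ and $a_{2j+1}$ to $a_{2j+2}$, assembling the $a$-vertices into a single $2l$-cycle, while (3),(4) assemble the $b$-vertices into a second $2l$-cycle; since the labels, which cross between the two families, do not affect the underlying graph, the LOG has exactly two components. This agrees with the fact, recalled above, that $S(2,6l)^{\mathrm{ab}}\cong\Z^2$.

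The main obstacle is discovering the correct relabelling, in particular the alternating inversion of the odd-indexed generators, together with the index bookkeeping needed to confirm that the $4l$ surviving relations match the four stated families verbatim rather than merely an equivalent set. I expect the verification that the relations with $i\equiv 2\pmod 3$ reproduce (3) and (4) to be the delicate step, since it is indirect — one already-derived relation must be substituted — and is where a sign or indexing slip is most likely to occur.
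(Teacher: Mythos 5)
Your proposal is correct and follows essentially the same route as the paper: the same alternating-inversion relabelling $a_{2j}=x_{6j}$, $b_{2j}=x_{6j+1}$, $a_{2j+1}=x_{6j+3}^{-1}$, $b_{2j+1}=x_{6j+4}^{-1}$, elimination of the generators $x_{6j+2}$ and $x_{6j+5}$, and Tietze moves arriving at the stated presentation (the paper merely routes the elimination through auxiliary generators $c_i$ pivoting on the $i\equiv 1\bmod 3$ relations, and deduces the two components from $S(2,6l)^{\mathrm{ab}}\cong\Z^2$ rather than by inspecting the underlying graph). Your substitutions do produce the four relation families verbatim, so the outlined verification goes through.
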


\begin{proof}
In this proof the index $\alpha$ ranges over the integers $0,\ldots, 6l-1$, the index $j$ ranges over the integers $0,\ldots ,l-1$ and the index $i$ ranges over $0,\ldots ,2l-1$. The subscripts of the $y$ generators are to be taken $\bmod 6l$ and the subscripts of $a,b,c$ generators are to be taken $\bmod$ $ 2l$. Now
\begin{alignat*}{1}
S(2,6l)
&= \pres{y_\alpha\ (0\leq \alpha<6l)}{y_\alpha y_{\alpha+2}=y_{\alpha+1}\ (0\leq \alpha<6l)}\\
&=\onethreepres{y_\alpha}
{y_{6j}y_{6j+2}=y_{6j+1}, y_{6j+3}y_{6j+5}=y_{6j+4},}
{y_{6j+2}y_{6j+4}=y_{6j+3}, y_{6j+5}y_{6j+7}=y_{6j+6},}
{y_{6j+1}y_{6j+3}=y_{6j+2}, y_{6j+4}y_{6j+6}=y_{6j+5}}\\
&=\onefivepres{y_\alpha,a_i,b_i,c_i}
{y_{6j+2}=y_{6j}^{-1}y_{6j+1}, y_{6j+5}^{-1}=y_{6j+4}^{-1}y_{6j+3},}
{y_{6j+2}=y_{6j+3}y_{6j+4}^{-1}, y_{6j+5}^{-1}=y_{6j+7}y_{6j+6}^{-1},}
{y_{6j+2}=y_{6j+1}y_{6j+3}, y_{6j+5}^{-1}=y_{6j+6}^{-1}y_{6j+4}^{-1},}
{a_{2j}=y_{6j},b_{2j}=y_{6j+1},c_{2j}=y_{6j+2},}
{a_{2j+1}=y_{6j+3}^{-1},b_{2j+1}=y_{6j+4}^{-1},c_{2j+1}=y_{6j+5}^{-1}}
\\
&=\onethreepres{a_i,b_i,c_i}
{c_{2j}=a_{2j}^{-1}b_{2j}, c_{2j+1}=b_{2j+1}a_{2j+1}^{-1},}
{c_{2j}=a_{2j+1}^{-1}b_{2j+1}, c_{2j+1}=b_{2j+2}a_{2j+2}^{-1},}
{c_{2j}=b_{2j}a_{2j+1}^{-1}, c_{2j+1}=a_{2j+2}^{-1}b_{2j+1}}
\\
&=\onetwopres{a_i,b_i}
{b_{2j}a_{2j+1}^{-1}=a_{2j}^{-1}b_{2j}, a_{2j+2}^{-1}b_{2j+1}=b_{2j+1}a_{2j+1}^{-1},}
{b_{2j}a_{2j+1}^{-1}=a_{2j+1}^{-1}b_{2j+1}, a_{2j+2}^{-1}b_{2j+1}=b_{2j+2}a_{2j+2}^{-1}}
\\
&=\onetwopres{a_i,b_i}
{a_{2j+1}=b_{2j}^{-1}a_{2j}b_{2j}, a_{2j+1}=b_{2j+1}^{-1}a_{2j+2}b_{2j+1},}
{b_{2j}=a_{2j+1}^{-1}b_{2j+1}a_{2j+1}, b_{2j+2}=a_{2j+2}^{-1}b_{2j+1}a_{2j+2}}.
\end{alignat*}
The LOG has two components since $S(2,n)^\mathrm{ab}\cong \Z^2$.
\end{proof}

Figure \ref{fig:LOG} shows the LOG corresponding to the LOG presentation in Theorem \ref{thm:sieradskiasLOG} for the group $S(2,12)$. As an immediate corollary we have:

\begin{corollary}\label{cor:sieradski}
The Sieradski group $S(2,n)$ is a LOG group if and only if $6|n$.
\end{corollary}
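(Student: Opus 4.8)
The statement is a biconditional whose two implications are, in effect, already available: the reverse implication is supplied by Theorem~\ref{thm:sieradskiasLOG}, and the forward implication follows from the necessary condition that a LOG group have free abelian abelianisation. The plan is therefore to assemble these two ingredients, doing the small amount of genuine work needed to pin down the forward direction. For the reverse implication, if $6\mid n$ then, writing $n=6l$, Theorem~\ref{thm:sieradskiasLOG} exhibits an explicit LOG presentation of $S(2,6l)$, so $S(2,n)$ is a LOG group and nothing further is required.

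For the forward implication I would first record that $S(2,n)=H(n,2)=G_n(x_0x_2x_1^{-1})$ has representer polynomial $f(t)=t^2-t+1=\Phi_6(t)$, with $g(t)=t^n-1$; since $\Phi_6$ is irreducible and divides $t^n-1$ exactly when $6\mid n$, the gcd $z(t)=(f,g)$ equals $\Phi_6(t)$ when $6\mid n$ and equals $1$ otherwise. Now suppose $S(2,n)$ is a LOG group. As recalled in the introduction, the abelianisation of a LOG group is free abelian with rank equal to the number of components of the underlying graph, so $S(2,n)^{\mathrm{ab}}$ is free abelian; combined with the classification stated just before Theorem~\ref{thm:sieradskiasLOG} (that $S(2,n)^{\mathrm{ab}}$ is free abelian of positive rank precisely when $6\mid n$, in which case it is $\Z^2$), this forces $6\mid n$. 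To keep the argument self-contained I would re-derive that classification from the resultant criterion~(\ref{eq:freeabelian}) together with Theorem~\ref{thm:WilliamsOdoni} applied to $f(t)=t^m-t^k+1$ with $m=2$, $k=1$: here $(n,m,k)=1$ and the congruence $m\equiv 2k\bmod 6$ holds automatically, so part~(a) gives $\mathrm{Res}(f,g)=0$ — equivalently $S(2,n)^{\mathrm{ab}}$ infinite — if and only if $6\mid n$.

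The only delicate point, and hence the main (modest) obstacle, is the disposal of the cases $6\nmid n$, which I would handle through Theorem~\ref{thm:WilliamsOdoni}. When $(n,6)\in\{2,3\}$ the resultant is neither $0$ nor $1$, so $S(2,n)^{\mathrm{ab}}$ is a nontrivial finite group and cannot be the free abelian abelianisation of a LOG group; when $(n,6)=1$, part~(b) yields $\mathrm{Res}(f,g)=1$, so $S(2,n)^{\mathrm{ab}}$ collapses to the rank-$0$ case and $S(2,n)$ cannot arise from a nonempty LOG. In every case $6\nmid n$ the abelianisation fails to be infinite free abelian, so $6\mid n$ is indeed necessary, and together with Theorem~\ref{thm:sieradskiasLOG} this establishes the equivalence. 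The whole conceptual content thus reduces to the identification $f=\Phi_6$ and the resultant dichotomy of Theorem~\ref{thm:WilliamsOdoni}.
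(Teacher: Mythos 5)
Your proposal is correct and takes essentially the same route as the paper: the paper also obtains the corollary immediately from Theorem~\ref{thm:sieradskiasLOG} for the reverse direction and, for the forward direction, from the fact that a LOG group has free abelianisation together with the classification of when $S(2,n)^{\mathrm{ab}}$ is free abelian (which the paper cites from \cite{JohnsonOdoni,COS}, and which you instead re-derive from the identification $f=\Phi_6$ and Theorem~\ref{thm:WilliamsOdoni}). The only difference is this self-contained re-derivation in place of the citation, which is a cosmetic rather than a structural change.
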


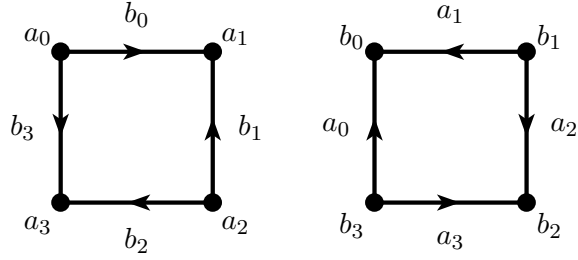
\begin{figure}
\begin{center}
\begin{tabular}{cc}
\psset{xunit=1cm,yunit=1cm,algebraic=true,dimen=middle,dotstyle=o,dotsize=7pt 0,linewidth=1.6pt,arrowsize=3pt 2,arrowinset=0.25}
\begin{pspicture*}(0,0)(4,4)
\psdots[dotstyle=*](1,1)
\psdots[dotstyle=*](1,3)
\psdots[dotstyle=*](3,3)
\psdots[dotstyle=*](3,1)
\psline[ArrowInside=->,ArrowInsidePos=0.5](1,3)(3,3)
\psline[ArrowInside=->,ArrowInsidePos=0.5](1,3)(1,1)
\psline[ArrowInside=->,ArrowInsidePos=0.5](3,1)(1,1)
\psline[ArrowInside=->,ArrowInsidePos=0.5](3,1)(3,3)
\rput(0.7,3.2){$a_0$}
\rput(3.3,3.2){$a_1$}
\rput(3.3,0.7){$a_2$}
\rput(0.7,0.7){$a_3$}
\rput(0.5,2){$b_3$}
\rput(3.5,2){$b_1$}
\rput(2,0.5){$b_2$}
\rput(2,3.5){$b_0$}
\end{pspicture*}
%
%
\psset{xunit=1cm,yunit=1cm,algebraic=true,dimen=middle,dotstyle=o,dotsize=7pt 0,linewidth=1.6pt,arrowsize=3pt 2,arrowinset=0.25}
\begin{pspicture*}(0,0)(4,4)
\psdots[dotstyle=*](1,1)
\psdots[dotstyle=*](1,3)
\psdots[dotstyle=*](3,3)
\psdots[dotstyle=*](3,1)
\psline[ArrowInside=->,ArrowInsidePos=0.5](3,3)(1,3)
\psline[ArrowInside=->,ArrowInsidePos=0.5](1,1)(1,3)
\psline[ArrowInside=->,ArrowInsidePos=0.5](1,1)(3,1)
\psline[ArrowInside=->,ArrowInsidePos=0.5](3,3)(3,1)
\rput(0.7,3.2){$b_0$}
\rput(3.3,3.2){$b_1$}
\rput(3.3,0.7){$b_2$}
\rput(0.7,0.7){$b_3$}
\rput(0.5,2){$a_0$}
\rput(3.5,2){$a_2$}
\rput(2,0.5){$a_3$}
\rput(2,3.5){$a_1$}
\end{pspicture*}
\end{tabular}
\end{center}
  \caption{Labelled Oriented Graph corresponding to the Sieradski group $S(2,12)$.\label{fig:LOG}}
\end{figure}

We conjecture that these Sieradski groups are the only Gilbert-Howie groups (and hence, by Theorem \ref{thm:stronglyyirredCHRnotLOG}, the only groups of Fibonacci type $G_n(m,k)$) that are LOG groups.

\begin{conjecture}\label{conj:H(n,m)freeabelian}
Let $n \geq 1$, $2\leq m<n$, $n\equiv 0\bmod$ $ 6$ and $m\equiv 2\bmod$ $ 6$. Then $\gamma_\rho=1$ if and only if $m=2$. (That is, $H(n,m)^\mathrm{ab}\cong \Z^2$ if and only if $H(n,m)=H(n,2)=S(2,n)$.)
\end{conjecture}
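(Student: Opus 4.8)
The plan is to convert the statement into a resultant–divisibility problem and attack it with a combination of the Mahler measure bound of Theorem~\ref{thm:Mahler} and explicit cyclotomic resultant computations. First I would record the reduction: with $f(t)=t^m-t+1$, $m\equiv 2\bmod 6$ and $n\equiv 0\bmod 6$, Lemma~\ref{lem:CHRrank} (in the case $k=1$) gives $z(t)=\Phi_6(t)$, so that $F=f/\Phi_6$ has no roots on the unit circle and, by Theorem~\ref{thm:SNFresultant},
\[ \gamma_\rho=\mathrm{Res}(F,G)=\prod_{d\mid n,\ d\neq 6}\mathrm{Res}(F,\Phi_d), \qquad G(t)=\frac{t^n-1}{\Phi_6(t)}=\prod_{d\mid n,\ d\neq 6}\Phi_d(t). \]
Because $F$ has no unit-circle roots, each factor $\mathrm{Res}(F,\Phi_d)$ is a positive integer, so $\gamma_\rho\geq 1$ and, crucially, $\gamma_\rho>1$ as soon as a \emph{single} divisor $d\mid n$ with $d\neq 6$ satisfies $\mathrm{Res}(F,\Phi_d)\geq 2$. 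The easy direction is then immediate: if $m=2$ then $f=\Phi_6$, so $F=1$ and $\gamma_\rho=1$, recovering the Sieradski case of Theorem~\ref{thm:sieradskiasLOG}.

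For the converse ($m>2\Rightarrow\gamma_\rho>1$) I would split into two regimes. In the first, note that since $F$ is monic with a root off the unit circle it has an irreducible factor $h$ that is non-constant, non-cyclotomic and $\neq\pm t$; writing $\mathrm{Res}(h,G)=\mathrm{Res}(h,t^n-1)/\mathrm{Res}(h,\Phi_6)$ and applying Theorem~\ref{thm:Mahler} to the numerator (which grows at least like $\mathcal{M}(h)^n$ up to a polynomial factor) while the denominator is a fixed number for fixed $m$, one gets $\mathrm{Res}(h,G)>1$, and hence $\gamma_\rho>1$ since $\mathrm{Res}(h,G)$ divides $\mathrm{Res}(F,G)$, for all sufficiently large $n$. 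This already yields Theorem~\ref{thm:finitelymanyfreeabelianGH}: for each fixed $m\geq 3$ only finitely many $n$ can fail. The second regime is the finite remainder for each $m$, together with the structured families $n=6b,12b,24b$ with $(b,6)=1$, handled by explicit computation. Here the useful reduction is that $\mathrm{Res}(F,\Phi_d)=\mathrm{Res}(f,\Phi_d)/\mathrm{Res}(\Phi_6,\Phi_d)=1$ whenever $d\mid(m-2)$ and $d\neq 6$, since then a primitive $d$-th root $\zeta$ satisfies $\zeta^m=\zeta^2$ and so $f(\zeta)=\Phi_6(\zeta)$; attention may therefore be restricted to divisors $d\mid n$ with $d\nmid(m-2)$, for which I would evaluate $\mathrm{Res}(f,\Phi_d)$ using Theorem~\ref{thm:WilliamsOdoni}(b) and Apostol's cyclotomic resultant formula~\cite{Apostol}, choosing $d$ (a divisor built from the prime powers $2^r$, $3^s$, or a prime of $b$) so as to exhibit a prime $p\mid\gamma_\rho$.

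The main obstacle is uniformity in $m$ and $n$ simultaneously. The bound of Theorem~\ref{thm:Mahler} controls the product for fixed $m$ and large $n$, but its constants and the base $\mathcal{M}(h)$ all depend on $h$ and degrade as $m$ grows, so it cannot by itself cover the delicate range in which $n$ is only slightly larger than $m$. Closing that range amounts to ruling out a global conspiracy in which $\mathrm{Res}(F,\Phi_d)=1$ for \emph{every} admissible $d$; equivalently, one must guarantee, for every $m>2$ and every $n\equiv 0\bmod 6$ with $n>m$, the existence of a prime $p$ and an $n$-th root of unity $\bar\zeta\in\overline{\mathbb{F}_p}$ of order $\neq 6$ with $\bar\zeta^m-\bar\zeta+1=0$ — that is, a conjugate of a root $\theta$ of $F$ whose reduction modulo some prime of $\mathbb{Q}(\theta)$ is an $n$-th root of unity. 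Controlling the multiplicative orders of these algebraic numbers modulo primes, uniformly in $m$, is precisely the hard point, and is why the statement is posed as a conjecture rather than a theorem.
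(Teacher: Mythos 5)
This statement is a conjecture in the paper, not a theorem: the paper offers no complete proof, only partial evidence. Your proposal correctly establishes the easy direction ($m=2$ gives $f=\Phi_6$, $F=1$, $\gamma_\rho=1$), and for the converse your two-regime strategy — the Mahler-measure bound of Theorem~\ref{thm:Mahler} applied to the non-cyclotomic irreducible factor of $F$ to handle fixed $m$ and large $n$, plus explicit cyclotomic resultant evaluations for structured $n$ — is essentially the route the paper itself takes in Theorem~\ref{thm:finitelymanyfreeabelianGH}, Lemma~\ref{lem:GilbertHowieFreeAbelian(m,n)=2}, Theorem~\ref{thm:GilbertHowieFreeAbelianmod6b} and Corollary~\ref{cor:H(n,m)freeabeliann=6bor12bor24b}. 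Your closing diagnosis of the genuine obstruction (non-uniformity of the Mahler constants in $m$, and the need to exhibit, for every admissible pair $(m,n)$, some $d\mid n$, $d\neq 6$, with $\mathrm{Res}(F,\Phi_d)>1$) is accurate and is precisely why the statement remains open for both you and the authors.
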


The forward implication is well known and easy. For the converse, in Theorem \ref{thm:finitelymanyfreeabelianGH} we show that for fixed $m$, there can be at most finitely many counterexamples.

\begin{theorem}\label{thm:finitelymanyfreeabelianGH}
Fix $m\geq 8$ where $m\equiv 2\bmod$ $ 6$. Then there exist at most finitely many integers $n$ with $n\equiv 0\bmod$ $ 6$ such that $\gamma_\rho=1$ (that is, for which $H(n,m)^\mathrm{ab}\cong \Z^2$).
\end{theorem}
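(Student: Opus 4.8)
The plan is to reduce the statement to a direct application of Corollary~\ref{cor:finitelymanyfreeabelian} with $k=1$. Since $m$ is fixed, the representer polynomial $f(t)=t^m-t+1$ of $H(n,m)$ is a fixed element of $\Z[t]$, and only $n$ (hence $g(t)=t^n-1$, together with the associated $z,F,G$) varies. By Theorem~\ref{thm:SNFresultant} we have $\gamma_\rho=\mathrm{Res}(F,G)$ for every such $n$, so it suffices to exhibit a single irreducible factor $h(t)$ of $f$ that is non-constant, not cyclotomic, and not equal to $\pm t$: the corollary will then yield at most finitely many $n$ with $\mathrm{Res}(F,G)\leq 1$, in particular at most finitely many with $\gamma_\rho=1$.

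To produce such a factor I would reuse the factorisation already invoked in Lemma~\ref{lem:CHRrank}. Since $m\equiv 2\bmod 6$, the primitive sixth roots of unity are roots of $f$, so $\Phi_6(t)=t^2-t+1$ divides $f$, and by \cite[Theorem~3]{Selmer} we may write $f(t)=\Phi_6(t)F(t)$ where $F(t)$ has no root of modulus $1$. Because $m\geq 8$, we have $\deg F=m-2\geq 6>0$, so $F$ is non-constant and admits an irreducible factor $h(t)$. This $h$ divides $f$; it is non-constant; it is not cyclotomic, since every cyclotomic polynomial has all of its roots on the unit circle while $h\mid F$ has none there; and it is not $\pm t$, since $f(0)=1\neq 0$ forces $t\nmid f$.

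With $h(t)$ in hand, Corollary~\ref{cor:finitelymanyfreeabelian} applied to $f$ with $k=1$ shows that there are at most finitely many integers $n$ for which $\mathrm{Res}(F,G)\leq 1$, and in particular at most finitely many with $\gamma_\rho=\mathrm{Res}(F,G)=1$. For each $n\equiv 0\bmod 6$, Lemma~\ref{lem:CHRrank} gives $\beta(H(n,m)^\mathrm{ab})=2$, so the condition $\gamma_\rho=1$ is precisely the condition $H(n,m)^\mathrm{ab}\cong\Z^2$, and the theorem follows.

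I expect the argument to be, in essence, a verification of the hypotheses of Corollary~\ref{cor:finitelymanyfreeabelian}; the only genuinely non-routine ingredient is the existence of the non-cyclotomic factor $h$, which is supplied by the Selmer-type result quoted above. It is worth noting that the hypothesis $m\geq 8$ (rather than merely $m\geq 2$) is exactly what guarantees $\deg F>0$: for $m=2$ one has $f=\Phi_6$, so $F$ is constant, no such $h$ exists, and indeed $H(n,2)=S(2,n)$ furnishes infinitely many $n$ with $\gamma_\rho=1$, showing that the restriction on $m$ cannot be dropped.
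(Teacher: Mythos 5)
Your proposal is correct and follows essentially the same route as the paper: both reduce the statement to Corollary~\ref{cor:finitelymanyfreeabelian} after exhibiting a non-constant, non-cyclotomic irreducible factor of $f(t)=t^m-t+1$ via Selmer's results on trinomials. The only (immaterial) difference is that the paper quotes Selmer's Theorem~1 to assert that $F=f/\Phi_6$ is itself irreducible and not cyclotomic, whereas you use the weaker ``no roots of modulus $1$'' statement from Lemma~\ref{lem:CHRrank} and pass to an arbitrary irreducible factor of $F$, which works just as well.
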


\begin{proof}
Here $f(t)=t^m-t+1$, $g(t)=t^n-1$, $\gamma_\rho=\mathrm{Res}(F,G)$ where, for $n\equiv 0 \bmod$ $ 6$, $F(t)=f(t)/\Phi_6(t)$, $G(t)=g(t)/\Phi_6(t)$. By \cite[Theorem 1]{Selmer} $F(t)$ is irreducible and not cyclotomic so Corollary \ref{cor:finitelymanyfreeabelian} implies that there are at most finitely many $n$ for which $\gamma_\rho=1$, as required.
\end{proof}

In Theorem \ref{thm:GilbertHowieFreeAbelianmod6b} and Corollary \ref{cor:H(n,m)freeabeliann=6bor12bor24b} we provide further evidence for Conjecture \ref{conj:H(n,m)freeabelian}.
We need the following lemma.

\begin{lemma}\label{lem:GilbertHowieFreeAbelian(m,n)=2}
Suppose $m\equiv 2\bmod$ $ 6$, $n\equiv 0\bmod$ $ 6$.  If $\gamma_\rho=1$ then $(m,n)=2$.
\end{lemma}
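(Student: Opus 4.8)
The plan is to exploit multiplicativity of the resultant over the cyclotomic factors of $g$, reducing the single condition $\gamma_\rho=1$ to a family of numerical identities indexed by the divisors of $n$, and then to test these identities against the prime divisors of $(m,n)$. First I would invoke Lemma~\ref{lem:CHRrank}: since $H(n,m)=G_n(m,1)$ has $f(t)=t^m-t+1$, and $m\equiv 2\bmod 6$ (i.e.\ $m\equiv 2k\bmod 6$ with $k=1$) while $n\equiv 0\bmod 6$, it gives $z(t)=(f,g)=\Phi_6(t)$, so $F=f/\Phi_6$ and $G=g/\Phi_6=\prod_{d\mid n,\,d\neq 6}\Phi_d$. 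Because $\Phi_6$ is the exact gcd of $f$ and $g$, each $\Phi_d$ with $d\mid n$, $d\neq 6$, is coprime to $F$, so every $\mathrm{Res}(F,\Phi_d)$ is a positive integer. By Theorem~\ref{thm:SNFresultant}, $\gamma_\rho=\mathrm{Res}(F,G)=\prod_{d\mid n,\,d\neq 6}\mathrm{Res}(F,\Phi_d)$, whence $\gamma_\rho=1$ forces each $\mathrm{Res}(F,\Phi_d)=1$, and, factoring $f=\Phi_6 F$,
\[
\mathrm{Res}(f,\Phi_d)=\mathrm{Res}(\Phi_6,\Phi_d)\qquad\text{for every }d\mid n,\ d\neq 6.
\]
Note also that $m\equiv 2\bmod 6$ already yields $2\mid m$ and $3\nmid m$, so $2\mid(m,n)$ and $3\nmid(m,n)$ with no further work; it remains to control the odd primes $\geq 5$ and the $2$-adic valuation of $(m,n)$.

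The key computational observation is that when $d\mid m$ the trinomial collapses on the primitive $d$-th roots of unity. Indeed, for such a root $\zeta$ we have $\zeta^m=1$, so $f(\zeta)=\zeta^m-\zeta+1=2-\zeta$, and since $f$ and $\Phi_d$ are monic,
\[
\mathrm{Res}(f,\Phi_d)=\Big|\prod_{\Phi_d(\zeta)=0}(2-\zeta)\Big|=\Phi_d(2).
\]
I would apply this twice. For a prime $p\geq 5$ dividing $(m,n)$ we have $p\mid m$, hence $\mathrm{Res}(f,\Phi_p)=\Phi_p(2)=2^p-1\geq 31$, whereas $(p,6)=1$ gives $\mathrm{Res}(\Phi_6,\Phi_p)=1$ by \cite[Theorem 3]{Apostol}; this contradicts the displayed identity, so no prime $\geq 5$ divides $(m,n)$. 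For the prime $2$, suppose for contradiction that $4\mid m$ and $4\mid n$. Then $\Phi_4\mid G$, and taking $d=4$ gives $\mathrm{Res}(f,\Phi_4)=\Phi_4(2)=5$, while the direct evaluation $\Phi_6(i)\Phi_6(-i)=(-i)(i)=1$ shows $\mathrm{Res}(\Phi_6,\Phi_4)=1$; this again contradicts the identity. Hence $4\nmid m$ or $4\nmid n$.

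Combining these facts, $(m,n)$ has no odd prime factor, and its $2$-adic valuation $\min(v_2(m),v_2(n))$ equals $1$: if $4\nmid n$ this is immediate since then $v_2(n)=1$, and otherwise it follows from $4\nmid m$. Therefore $(m,n)=2$. I do not anticipate a genuine obstacle: the argument is driven entirely by the identity $\mathrm{Res}(f,\Phi_d)=\Phi_d(2)$, valid exactly when $d\mid m$, together with the small resultant values $\mathrm{Res}(\Phi_6,\Phi_d)$. The only point requiring care is bookkeeping of which $d$ actually occur as factors of $G$ (equivalently, which prime powers divide $n$) so that the comparison with $\mathrm{Res}(\Phi_6,\Phi_d)$ is legitimate; in particular the case $4\nmid n$ must be separated out, since then $\Phi_4$ is simply unavailable and the conclusion $v_2((m,n))=1$ holds automatically.
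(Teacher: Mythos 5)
Your proof is correct, and its engine is the same as the paper's: Lemma \ref{lem:CHRrank} gives $z(t)=\Phi_6(t)$, multiplicativity of the resultant over the cyclotomic factors of $G$ turns $\gamma_\rho=1$ into the identities $\mathrm{Res}(f,\Phi_d)=\mathrm{Res}(\Phi_6,\Phi_d)$ for every $d\mid n$ with $d\neq 6$, and the collapse $f(\zeta)=2-\zeta$ for $\zeta$ a primitive $d$-th root of unity with $d\mid m$ yields $\mathrm{Res}(f,\Phi_d)=\Phi_d(2)$. Where you diverge is in the choice of test divisor. The paper applies the identity exactly once, to $\delta=(m,n)$ itself: assuming $\delta\neq 2$, it is even, not divisible by $6$, hence $\geq 4$ and a divisor of $n$ other than $6$, so $\Phi_\delta\mid G$ and $\mathrm{Res}(\Phi_6,\Phi_\delta)=1$ by \cite{Apostol}; the contradiction then comes from Roitman's lower bound $\Phi_\delta(2)>2^{\sqrt{\delta}/4}>1$ \cite{Roitman}, which is needed because $\delta$ is an arbitrary even number $\geq 4$. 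You instead test each prime $p\geq 5$ dividing $(m,n)$ and, separately, $d=4$, where the explicit values $\Phi_p(2)=2^p-1$ and $\Phi_4(2)=5$ suffice, at the cost of a little extra bookkeeping with the $2$-adic valuation at the end (correctly handled, including the case $4\nmid n$ where $\Phi_4$ is unavailable but unnecessary). Your version is marginally more elementary in that it avoids the appeal to \cite{Roitman}; the paper's is marginally shorter, needing only one divisor and one contradiction. Both arguments are complete.
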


\begin{proof}
By Lemma \ref{lem:CHRrank} $z(t)=\Phi_6(t)$, and so $G(t)=\prod_{d|n, d\neq 6} \Phi_d(t)$.
Suppose for contradiction that $\gamma_\rho=1$ and $(m,n)\neq 2$. Then $\mathrm{Res}(F,G)=1$, by Theorem \ref{thm:SNFresultant}.

Let $\delta = (m,n)$, then $\delta$ is even, $\delta\neq 2$, $6\nmid \delta$. Since $\delta | n$ we have $\Phi_\delta(t) | G(t)$, and hence $\mathrm{Res}(F,\Phi_\delta)$ divides $\mathrm{Res}(F,G)=1$. Now
\begin{alignat*}{1}
\mathrm{Res}(f,\Phi_\delta)
&= \mathrm{Res}(\Phi_6(t)F(t),\Phi_\delta(t))\\
&= \mathrm{Res}(\Phi_6,\Phi_\delta)\cdot \mathrm{Res}(F,\Phi_\delta).
\end{alignat*}
Since neither $\delta/6$ nor $6/\delta$ are prime powers, $\mathrm{Res}(\Phi_6,\Phi_\delta)=1$ (see \cite[Theorem 4]{Apostol} or \cite{Lehmer,Diederichsen}) so $\mathrm{Res}(f,\Phi_\delta)= \mathrm{Res}(F,\Phi_\delta)$. On the other hand,
\begin{alignat*}{1}
\mathrm{Res} (f,\Phi_\delta)
&= \prod_{\Phi_\delta(\lambda)=0}(\lambda^m-\lambda+1)\\
&= \prod_{\Phi_\delta(\lambda)=0}(1-\lambda+1)\quad \mathrm{since}~\delta|m\\
&= \mathrm{Res} (t-2,\Phi_\delta(t))=\Phi_\delta(2).
\end{alignat*}
But $\delta\geq 4$ so, by \cite[Corollary 9]{Roitman}, $\Phi_\delta (2)>2^{\sqrt{\delta}/4}>1$. Thus $\mathrm{Res} (F,\Phi_\delta)=\mathrm{Res} (f,\Phi_\delta)>1$, a contradiction.
\end{proof}

\begin{theorem}\label{thm:GilbertHowieFreeAbelianmod6b}
Suppose $m\equiv 2\bmod$ $ 6$, $n=ab$ where $a=2^r\cdot 3^s$ where $r,s\geq 1$, $(b,6)=1$.  If $\gamma_\rho=1$ then $\mathrm{Res}(f,\Phi_d)=\mathrm{Res}(\Phi_6,\Phi_d)$ for all $d|n$, $d\neq 6$ and, moreover, $m\equiv 2\bmod$ $ 6b$ if $r=1$ and $m\equiv 2\bmod$ $ 12b$ if $r\geq 2$.
\end{theorem}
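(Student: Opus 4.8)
The plan is to prove the resultant identity first and then determine $m$ modulo the pairwise coprime parts $4$ (or $2$), $3$, and $b$ of the claimed modulus, reassembling them by the Chinese Remainder Theorem. The identity itself is immediate and is obtained exactly as at the start of Lemma~\ref{lem:Gn(m,k)3cases}: since $m\equiv 2\bmod 6$, Lemma~\ref{lem:CHRrank} gives $z(t)=\Phi_6(t)$, hence $G(t)=\prod_{d\mid n,\,d\neq 6}\Phi_d(t)$, and the hypothesis $\gamma_\rho=1$ combined with Theorem~\ref{thm:SNFresultant} forces $\mathrm{Res}(F,\Phi_d)=1$ for every $d\mid n$, $d\neq 6$. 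Multiplying by $\mathrm{Res}(\Phi_6,\Phi_d)$ and using $f=\Phi_6 F$ yields $\mathrm{Res}(f,\Phi_d)=\mathrm{Res}(\Phi_6,\Phi_d)$ for all such $d$.

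Next I would invoke Lemma~\ref{lem:GilbertHowieFreeAbelian(m,n)=2} to obtain $(m,n)=2$. Reading off $2$-adic, $3$-adic and $p$-adic valuations from $n=2^r3^sb$: the condition $(m,n)=2$ gives $3\nmid m$ (consistent with $m\equiv 2\bmod 3$, which already follows from $m\equiv 2\bmod 6$) and $v_2(m)=1$ whenever $r\geq 2$, so that $m\equiv 2\bmod 4$ in that case, while for $r=1$ we only need (and have) $m$ even. Since the factors $4$ or $2$, $3$, and $b$ are pairwise coprime, the Chinese Remainder Theorem reduces both target congruences to the single assertion $m\equiv 2\bmod b$.

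To prove $m\equiv 2\bmod b$, I would first specialise the identity to divisors $d\mid b$: as $(d,6)=1$ gives $\mathrm{Res}(\Phi_6,\Phi_d)=1$ by \cite{Apostol}, we get $\mathrm{Res}(f,\Phi_d)=1$ for all $d\mid b$, whence (using $f(1)=1$) $\mathrm{Res}(f,t^b-1)=1$. Lemma~\ref{lem:Gn(m,k)3cases} with $k=1$ then forces $b=1$, or $m\equiv 1\bmod b$, or $m\equiv 2\bmod b$; the first and third cases already give $m\equiv 2\bmod b$, so the whole theorem comes down to ruling out $m\equiv 1\bmod b$ with $b>1$.

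The crux, and the step I expect to be the main obstacle, is this last elimination, carried out through an explicit resultant evaluation. Assuming $m\equiv 1\bmod b$, choose a prime $p\mid b$, so that $m\equiv 1\bmod p$ and $m\equiv 2\bmod 3$, and apply the identity at $d=3p\mid n$: since neither $3p/6$ nor $6/(3p)$ is a prime power, $\mathrm{Res}(\Phi_6,\Phi_{3p})=1$ and hence $\mathrm{Res}(f,\Phi_{3p})=1$. On the other hand, writing each primitive $3p$-th root of unity as $\lambda=\zeta_3^{a}\zeta_p^{c}$ with $a\in\{1,2\}$ and $c=1,\ldots,p-1$, the congruences give $\lambda^m=\zeta_3^{2a}\zeta_p^{c}$, so $f(\lambda)=\zeta_p^{c}(\zeta_3^{2a}-\zeta_3^{a})+1=1\mp\sqrt{-3}\,\zeta_p^{c}$. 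Multiplying the two values for fixed $c$ gives $1+3\zeta_p^{2c}$, and therefore $\mathrm{Res}(f,\Phi_{3p})=\prod_{c=1}^{p-1}(1+3\zeta_p^{2c})=\prod_{l=1}^{p-1}(1+3\zeta_p^{l})=3^{p-1}\Phi_p(-1/3)=(3^p+1)/4$, which exceeds $1$ for $p\geq 5$. This contradicts $\mathrm{Res}(f,\Phi_{3p})=1$, so $m\equiv 1\bmod b$ is impossible and $m\equiv 2\bmod b$. Feeding this into the Chinese Remainder Theorem alongside the $2$- and $3$-adic information produces $m\equiv 2\bmod 6b$ when $r=1$ and $m\equiv 2\bmod 12b$ when $r\geq 2$, completing the argument; everything outside the boxed evaluation $(3^p+1)/4$ is routine bookkeeping.
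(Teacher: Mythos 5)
Your proof is correct, and its skeleton --- the resultant identity via Lemmas \ref{lem:CHRrank} and \ref{lem:Gn(m,k)3cases}, the reduction via Lemma \ref{lem:GilbertHowieFreeAbelian(m,n)=2} and the Chinese Remainder Theorem to the single congruence $m\equiv 2\bmod b$, and the resulting trichotomy $b=1$ or $m\equiv 1\bmod b$ or $m\equiv 2\bmod b$ --- is exactly the paper's. Where you genuinely diverge is in the step you rightly single out as the crux: eliminating $m\equiv 1\bmod b$ when $b>1$. The paper argues globally with the factor $t^b+1$ of $G(t)$: writing $m=1+\alpha b$ with $\alpha$ odd, it reduces $t^m\equiv -t$ modulo $t^b+1$ to get $\mathrm{Res}(f(t),t^b+1)=\mathrm{Res}(1-2t,t^b+1)=2^b+1$, peels off $\mathrm{Res}(\Phi_6(t),t^b+1)=3$, and concludes that $(2^b+1)/3>1$ divides $\mathrm{Res}(F,G)=1$. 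You instead work locally at a single prime $p\mid b$ and a single cyclotomic factor $\Phi_{3p}$, evaluating $\mathrm{Res}(f,\Phi_{3p})$ directly on the primitive $3p$-th roots of unity; your computation $\prod_{c=1}^{p-1}(1+3\zeta_p^{2c})=3^{p-1}\Phi_p(-1/3)=(3^p+1)/4>1$ is correct (it uses that $c\mapsto 2c$ permutes the nonzero residues modulo the odd prime $p$, and that $p-1$ is even), and it contradicts the already-established identity $\mathrm{Res}(f,\Phi_{3p})=\mathrm{Res}(\Phi_6,\Phi_{3p})=1$. The paper's route is slightly slicker --- reducing modulo $t^b+1$ avoids splitting into roots of unity and lands on a resultant with a linear polynomial --- while yours has the virtue of needing only one prime divisor of $b$ and of refuting the resultant identity itself rather than returning to $\mathrm{Res}(F,G)=1$; the quantities $(2^b+1)/3$ and $(3^p+1)/4$ play the same role in the two arguments.
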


\begin{proof}
If $r\geq 2$ then by Lemma \ref{lem:GilbertHowieFreeAbelian(m,n)=2} we may assume $m\equiv 2\bmod$ $ 4$, in which case $m\equiv 2\bmod$ $ 12b$ if and only if $m\equiv 2\bmod$ $ 6b$, so it suffices to prove this last condition. If $b=1$ then the hypothesis $m\equiv 2\bmod$ $ 6$ immediately implies the conclusion that $m\equiv 2\bmod$ $ 6b$, so assume $b>1$.

Suppose $\gamma_\rho=1$. Then, by Lemma \ref{lem:Gn(m,k)3cases}, $\mathrm{Res}(f,\Phi_d)=\mathrm{Res}(\Phi_6,\Phi_d)$ for all $d|n$, $d\neq 6$ and either $m\equiv 1$ or $2\bmod$ $ b$.

\noindent \textbf{Case 1: $m\equiv 1\bmod$ $ b$}. Here $m\equiv 1+\alpha b\bmod$ $ n$ for some $0\leq \alpha<a$. We claim that $(2^b+1)/3$ divides $\mathrm{Res}(F,G)=1$, a contradiction (since $b>1$). Observe that $m\equiv 2\bmod$ $ 6$ implies that $\alpha b\equiv 1\bmod$ $ 6$, so $(\alpha,6)=1$ and, in particular, $\alpha$ is odd.

Since $b$ is odd, $t^b+1=\prod_{d|b} \Phi_{2d}(t)$. Since $\prod_{d|b} \Phi_{2d}(t)| \prod_{d|n, d\neq 6} \Phi_d(t)$, $t^b+1$ divides $G(t)$, and hence $\mathrm{Res}(F(t),t^b+1)$ divides $\mathrm{Res}(F,G)=1$. Now
\begin{alignat*}{1}
\mathrm{Res} (f(t),t^b+1)
&= \mathrm{Res} (t\cdot(t^b)^\alpha-t+1,t^b+1)\\
&= \mathrm{Res} (t\cdot(-1)^\alpha-t+1,t^b+1)\\
&= \mathrm{Res} (1-2t,t^b+1)\\
&= 2^b+1.
\end{alignat*}
On the other hand (recalling that $z(t)=(f(t),g(t))=\Phi_6(t)$, by Lemma \ref{lem:CHRrank}),
\begin{alignat*}{1}
\mathrm{Res} (f(t),t^b+1)
&=\mathrm{Res} (\Phi_6(t),t^b+1)\cdot \mathrm{Res} (F(t),t^b+1)\\
&=\mathrm{Res} (\Phi_6,\prod_{d|b}\Phi_{2d})\cdot \mathrm{Res} (F(t),t^b+1)\\
&=\left(\prod_{d|b}\mathrm{Res} (\Phi_6,\Phi_{2d})\right)\cdot \mathrm{Res} (F(t),t^b+1)\\
&=\mathrm{Res} (\Phi_6,\Phi_{2})\left(\prod_{d|b,d>1}\mathrm{Res} (\Phi_6,\Phi_{2d})\right)\cdot \mathrm{Res} (F(t),t^b+1).
\end{alignat*}
But if $d|b, d>1$ then neither $(2d)/6$ or $6/(2d)$ is a prime power, so in the product each $\mathrm{Res} (\Phi_6,\Phi_{2d})=1$, and hence $\mathrm{Res} (f(t),t^b+1)=\mathrm{Res} (\Phi_6,\Phi_{2}) \cdot \mathrm{Res} (F(t),t^b+1)= 3 \cdot \mathrm{Res} (F(t),t^b+1)$. Thus $\mathrm{Res}(F(t),t^b+1)=(2^b+1)/3$, so $(2^b+1)/3$ divides $\mathrm{Res}(F,G)$ as claimed.

\medskip

\noindent \textbf{Case 2: $m\equiv 2\bmod$ $ b$}. Here $m= 2+\alpha b$ for some $0\leq \alpha<a$. Now $m\equiv 2\bmod$ $ 6$ implies that $\alpha b\equiv 0\bmod$ $ 6$, and hence $6|\alpha$, so $\alpha =6\beta$ for some $0\leq \beta <a/6$. Thus $m\equiv 2+6\beta b\bmod$ $ n$, i.e.\,$m\equiv 2\bmod$ $ 6b$, as claimed.
\end{proof}

\begin{corollary}\label{cor:H(n,m)freeabeliann=6bor12bor24b}
Let $n=6b$ or $n=12b$ or $n=24b$ where $(b,6)=1$, $2\leq m<n$, and $m\equiv 2\bmod$ $ 6$. Then $\gamma_\rho=1$ if and only if $m=2$. (That is, $H(n,m)^\mathrm{ab}\cong \Z^2$ if and only if $H(n,m)=H(n,2)=S(2,n)$.)
\end{corollary}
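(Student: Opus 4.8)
The plan is to take the forward implication as the easy, known direction and to devote the work to the converse. For the forward direction, $m=2$ gives $H(n,2)=S(2,n)$, and since $6\mid n$ we have $S(2,n)^{\mathrm{ab}}\cong\Z^2$, which is free abelian; hence by (\ref{eq:freeabelian}) $\gamma_\rho=\mathrm{Res}(F,G)=1$. The substance is therefore to show that $\gamma_\rho=1$ forces $m=2$, and the strategy is to feed each of the three shapes of $n$ into Theorem \ref{thm:GilbertHowieFreeAbelianmod6b} and then eliminate the single exceptional value of $m$ that survives.

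Writing $n=ab$ with $a=2^r3^s$ and $(b,6)=1$, the three cases $n=6b,12b,24b$ correspond to $(r,s)=(1,1),(2,1),(3,1)$, so $s=1$ and $r=1,2,3$ respectively. Assuming $\gamma_\rho=1$, Theorem \ref{thm:GilbertHowieFreeAbelianmod6b} yields $m\equiv2\bmod 6b=n$ when $n=6b$ (as $r=1$) and $m\equiv2\bmod 12b=n$ when $n=12b$ (as $r=2$); in both cases $2\leq m<n$ forces $m=2$, so these cases are complete. When $n=24b$ we have $r=3\geq2$, so the theorem gives only $m\equiv2\bmod 12b$, and within $2\leq m<24b$ this leaves exactly $m=2$ and $m=2+12b$.

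The genuinely new work, and the main obstacle, is to exclude $m=2+12b$ when $n=24b$. Here I would use the factor $\Phi_8$: since $8\mid n$ and $8\neq6$, Lemma \ref{lem:CHRrank} gives $z(t)=\Phi_6(t)$ and $\Phi_8\mid G$, so if $\gamma_\rho=1$ then $\mathrm{Res}(F,\Phi_8)\mid\mathrm{Res}(F,G)=1$ and therefore $\mathrm{Res}(f,\Phi_8)=\mathrm{Res}(\Phi_6,\Phi_8)\cdot\mathrm{Res}(F,\Phi_8)=\mathrm{Res}(\Phi_6,\Phi_8)=1$, the last equality by \cite[Theorem 4]{Apostol} since neither $8/6$ nor $6/8$ is a prime power. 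I would then compute $\mathrm{Res}(f,\Phi_8)$ directly and contradict this: for a primitive $8$th root of unity $\zeta$ we have $\zeta^4=-1$, and since $(b,6)=1$ makes $b$ odd we have $12b\equiv4\bmod 8$, so $\zeta^{12b}=-1$ and hence $f(\zeta)=\zeta^{2+12b}-\zeta+1=-\zeta^2-\zeta+1$. Thus $\mathrm{Res}(f,\Phi_8)=\prod_{\Phi_8(\zeta)=0}(-\zeta^2-\zeta+1)$; writing $-t^2-t+1=-(t-\phi_1)(t-\phi_2)$ with $\phi_1\phi_2=-1$ and using $\prod_{\zeta}(t-\zeta)=t^4+1$, the product telescopes to $(\phi_1^4+1)(\phi_2^4+1)=9$ (equivalently, pairing complex-conjugate roots gives two factors each of modulus-squared $3$). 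Since $9\neq1$ this contradicts $\gamma_\rho=1$, so $m=2$, as required.

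The appeal of this final step is that $\mathrm{Res}(f,\Phi_8)=9$ is independent of $b$, so a single cyclotomic factor settles the entire family $n=24b$ uniformly; the only delicate point is the congruence $12b\equiv4\bmod8$, which is precisely where oddness of $b$ (from $(b,6)=1$) is used.
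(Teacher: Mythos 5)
Your proof is correct, and its skeleton coincides with the paper's up to the last step: the forward direction is handled the same way, and both arguments feed the three shapes of $n$ into Theorem \ref{thm:GilbertHowieFreeAbelianmod6b}, disposing of $n=6b$ and $n=12b$ immediately and reducing the case $n=24b$ to excluding the single value $m=2+12b$. Where you genuinely diverge is in how that value is excluded. The paper splits off the whole factor $t^{12b}+1$ of $G(t)$, evaluates $\mathrm{Res}(f,G)=\gamma_\rho\cdot\mathrm{Res}(\Phi_6,G)=n^2/3$ via l'H\^{o}pital, and then shows $\mathrm{Res}(f(t),t^{12b}+1)=2+L_{12b}>n^2/3$ using Lucas numbers. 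You instead isolate the single cyclotomic factor $\Phi_8$ (which does divide $G$, since $8\mid 24b$ and $8\neq 6$), use $\mathrm{Res}(\Phi_6,\Phi_8)=1$ from \cite{Apostol}, and compute $\mathrm{Res}(f,\Phi_8)=9$ directly, a constant independent of $b$. Your computation checks out: $12b\equiv 4\bmod 8$ for odd $b$ gives $f(\zeta)=-\zeta^2-\zeta+1$ on the roots of $\Phi_8$, and $(\phi_1^4+1)(\phi_2^4+1)=(\phi_1\phi_2)^4+(\phi_1^2+\phi_2^2)^2-2(\phi_1\phi_2)^2+1=1+9-2+1=9\neq 1$. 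What your route buys is economy: no growth estimate, no exact evaluation of $\mathrm{Res}(f,G)$, just the resultant of two fixed low-degree polynomials settling the whole family $n=24b$ at once. What the paper's route buys is continuity with the $t^b+1$ device already used in Case 1 of the proof of Theorem \ref{thm:GilbertHowieFreeAbelianmod6b}, together with the identity $\mathrm{Res}(f,G)=n^2/3$, which is of some independent interest. Both proofs are valid.
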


\begin{proof}
The cases $n=6b$ or $n=12b$ follow immediately from Theorem \ref{thm:GilbertHowieFreeAbelianmod6b} so assume $n=24b$. Factorize $g(t)=(t^{12b}-1)(t^{12b}+1)$. Since $\Phi_6(t)$ divides $(t^{12b}-1)$ the factor $(t^{12b}+1)$ divides $G(t)$, and hence $\mathrm{Res}(f(t),t^{12b}+1)$ divides $\mathrm{Res}(f,G)$. The resultant $\mathrm{Res}(f,G)=\gamma_\rho \cdot \mathrm{Res}(\Phi_6,G)=G(\zeta_6)G(\zeta_6^{-1})=n^2/3$, by an application of l'H\^{o}pital's rule, so it suffices to show  $\mathrm{Res}(f(t),t^{12b}+1)>n^2/3$. By Theorem \ref{thm:GilbertHowieFreeAbelianmod6b} we may assume $m=2+12b$. Therefore
\begin{alignat*}{1}
\mathrm{Res}(f(t),t^{12b}+1)
&= \mathrm{Res}(t^{2}(t^{12b})-t+1,t^{12b}+1)\\
&= \mathrm{Res}(-t^{2}-t+1,t^{12b}+1)\\
&= \left(((-1+\sqrt{5})/2)^{12b} +1\right)\left(((-1-\sqrt{5})/2)^{12b} +1\right)\\
&= 2 + \left((-1-\sqrt{5})/2 \right)^{12b} +\left((-1+\sqrt{5})/2 \right)^{12b}\\
&= 2 + L_{12b}
\end{alignat*}
where $L_{12b}$ is the $12b$-th Lucas number. Since $2+L_{12b}>192b^2=n^2/3$ for all $b\geq 1$ the resultant $\mathrm{Res}(f(t),t^{12b}+1)>n^2/3$, as required.
\end{proof}

\section{Cyclically presented groups and Prishchepov groups as connected LOG groups}\label{sec:knotgroups}

Connected LOG groups abelianize to $\Z$. In this section we obtain necessary conditions for a cyclically presented group $G_n(w)$ with representer polynomial $f(t)$ to abelianize to $\Z$. If $G_n(w)^\mathrm{ab}\cong \Z$ then, in particular, $\beta (G_n(w)^\mathrm{ab})=1$ is odd. The following characterisations follow immediately from Corollary \ref{cor:G_n(w)^ab} (where $\rho$ is the rank, and $\gamma_\rho$ is the last non-zero determinantal divisor of the  $n\times n$ circulant matrix associated with $f$):
\begin{alignat}{1}
\beta(G_n(w)^\mathrm{ab})~\mathrm{is~odd}\Leftrightarrow n-\rho~\mathrm{is~odd}\Leftrightarrow \mathrm{deg}((f(t),g(t)))~\mathrm{is~odd},\label{eq:oddbetti}
\end{alignat}
\begin{alignat}{1}
G_n(w)^\mathrm{ab}\cong \Z\Leftrightarrow n-\rho=1~\mathrm{and}~\gamma_\rho=1\Leftrightarrow \mathrm{deg}((f(t),g(t)))=1~\mathrm{and}~\gamma_\rho=1.\label{eq:G_n(w)=Z}
\end{alignat}
We refine (\ref{eq:oddbetti}) slightly:

\begin{prop}\label{prop:oddbetti}
Let $f(t)\in \Z[t]$, $g(t)=t^n-1$. Then $n-\rho$ is odd if and only if one of the following holds:
\begin{itemize}
  \item[(a)] $n$ is odd and $f(1)=0$; or
  \item[(b)] $n$ is even and either
  \begin{itemize}
    \item[(i)] $f(1)=0$ and $f(-1)\neq 0$; or
    \item[(ii)] $f(-1)=0$ and $f(1)\neq 0$.
  \end{itemize}
\end{itemize}
\end{prop}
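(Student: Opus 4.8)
The plan is to work directly with the degree of the gcd. By Theorem \ref{thm:SNFresultant} we have $\rho = n - \deg(z)$ where $z(t) = (f(t),g(t))$, so $n-\rho = \deg(z)$; this is exactly the last equivalence recorded at (\ref{eq:oddbetti}). Hence it suffices to decide precisely when $\deg(z)$ is odd, and the whole argument will be a parity computation on the cyclotomic factorisation of $g$.

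First I would factor $g(t) = t^n - 1 = \prod_{d\mid n} \Phi_d(t)$ into cyclotomic polynomials. Since $t^n-1$ has distinct roots it is squarefree, and each $\Phi_d$ is irreducible over $\Z$; therefore the monic gcd is $z(t) = \prod_{d\in S'} \Phi_d(t)$, where $S' = \{\, d\mid n : \Phi_d(t)\mid f(t)\,\}$ (an irreducible factor $\Phi_d$ of $g$ divides $z$ iff it divides $f$). Consequently $\deg(z) = \sum_{d\in S'} \varphi(d)$. The crux is then the elementary fact that $\varphi(d)$ is odd precisely when $d\in\{1,2\}$, being even for every $d\geq 3$. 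Thus $\sum_{d\in S'}\varphi(d)$ is odd if and only if exactly one of the indices $1$ and $2$ belongs to $S'$. Translating membership back to conditions on $f$: we have $1\in S'$ iff $\Phi_1=t-1$ divides $f$, i.e.\ $f(1)=0$; and $2\in S'$ iff $2\mid n$ and $\Phi_2=t+1$ divides $f$, i.e.\ $2\mid n$ and $f(-1)=0$. The requirement $2\mid n$ here is essential, since if $2\nmid n$ then $\Phi_2$ is not even a factor of $g$, so $2\notin S'$ regardless of the value of $f(-1)$.

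Finally I would split on the parity of $n$. If $n$ is odd then $2\notin S'$ automatically, so $\deg(z)$ is odd iff $1\in S'$, i.e.\ iff $f(1)=0$, which is case (a). If $n$ is even then both $1$ and $2$ divide $n$, so "exactly one of $1,2$ lies in $S'$" unpacks into the two mutually exclusive possibilities $f(1)=0$ with $f(-1)\neq 0$, and $f(-1)=0$ with $f(1)\neq 0$, which are cases (b)(i) and (b)(ii). This completes the equivalence. I do not expect a genuine obstacle in this argument; the only point needing a little care is the asymmetry between $t-1$ and $t+1$, namely that $2\in S'$ forces $2\mid n$, which is exactly what makes the odd case depend only on $f(1)$.
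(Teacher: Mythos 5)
Your proof is correct and rests on the same underlying observation as the paper's (one-sentence) proof: the parity of $n-\rho=\deg\bigl((f,g)\bigr)$ is decided entirely by whether $t-1$ and $t+1$ divide $f$, since every other irreducible factor of $t^n-1$ contributes an even degree. You phrase this via the cyclotomic factorisation and the fact that $\varphi(d)$ is even for $d\geq 3$, whereas the paper phrases it via non-real roots of $g$ occurring in conjugate pairs; these are the same argument in different clothing, and your version simply writes out the details the paper leaves implicit.
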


\begin{proof}
This holds since the roots of $g(t)$ arise in complex conjugate pairs and the only real roots of $g(t)$ are $1$ and (when $n$ is even) $-1$.
\end{proof}

We now give necessary and sufficient conditions for (\ref{eq:G_n(w)=Z}) to hold:

\begin{theorem}\label{thm:G^ab=Z}
Let $f(t)\in \Z[t]$, $g(t)=t^n-1$ and let $\nu=\mathrm{max}\{ d\ |\ f(t)\in \Z[t^d] \}$. Then $n-\rho=1$ and $\gamma_\rho=1$ if and only if $(n,\nu)=1$ and one of the following holds:
\begin{itemize}
  \item[(a)] $n$ is odd, $f(1)=0$, $\left( f(t), g(t) \right)=t-1$, and $\mathrm{Res}\left( f(t)/(t-1), \sum_{i=0}^{n-1} t^i \right)=1$;
  \item[(b)] $n$ is even, $f(1)=0$, $|f(-1)|=2$, $\left( f(t), g(t) \right)=t-1$, and $\mathrm{Res}\left( f(t)/(t-1), \sum_{i=0}^{n-1} t^i \right)=1$;
  \item[(c)] $n$ is even, $f(-1)=0$, $|f(1)|=2$, $\left( f(t), g(t)\right)=t+1$, and $\mathrm{Res}\left( f(t)/(t+1), \sum_{i=0}^{n-1} (-t)^i \right)=1$ and $\mathrm{Res}(f(t),t^c-1)=2$ where $c$ is the largest odd divisor of $n$.
\end{itemize}
\end{theorem}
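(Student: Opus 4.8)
The plan is to reduce both conditions to statements about $z(t)=(f(t),g(t))$ and the resultant $\mathrm{Res}(F,G)$, and then to split into three cases according to whether $z=t-1$ or $z=t+1$ and the parity of $n$. By Theorem \ref{thm:SNFresultant} we have $n-\rho=\deg(z)$ and $\gamma_\rho=\mathrm{Res}(F,G)$, so $n-\rho=1$ is equivalent to $\deg(z)=1$. Since $z$ is a monic divisor of $t^n-1=\prod_{d\mid n}\Phi_d$, it is a product of distinct cyclotomic polynomials, and $\deg(z)=1$ forces $z=\Phi_1=t-1$ or $z=\Phi_2=t+1$, the latter only when $n$ is even. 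When $z=t-1$ one has $G=\sum_{i=0}^{n-1}t^i$; when $z=t+1$ (so $n$ even) one computes $\sum_{i=0}^{n-1}(-t)^i=(1-t^n)/(1+t)=-G$. Hence, under the $|\cdot|$ convention, the displayed resultant in each of (a)--(c) is exactly $\mathrm{Res}(F,G)=\gamma_\rho$. This already gives the sufficiency direction: each case includes $(f,g)=t\mp1$ (so $\deg(z)=1$ and $n-\rho=1$) together with the resultant condition (so $\gamma_\rho=1$), and the remaining equalities in the cases are not needed for this implication. It therefore remains to prove necessity.

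For the converse, assume $n-\rho=1$ and $\gamma_\rho=1$, i.e.\ $z\in\{t-1,t+1\}$ and $\mathrm{Res}(F,G)=1$. First I would establish $(n,\nu)=1$ using the symmetry coming from $\nu$: writing $f(t)=\tilde f(t^\nu)$, every $\nu$-th root of unity $\omega$ satisfies $f(\omega t)=f(t)$. Suppose $(n,\nu)=e>1$ and pick a primitive $e$-th root of unity $\omega$, so that $\omega^\nu=\omega^n=1$ and $\omega\neq1$. If $z=t-1$ then $f(\omega)=f(1)=0$ while $\omega^n=1$, producing a common root of $f$ and $g$ other than $1$; if $z=t+1$ then evaluating $f(\omega t)=f(t)$ at $t=-1$ gives $f(-\omega)=f(-1)=0$ with $(-\omega)^n=1$ and $-\omega\neq-1$. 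Either way $\deg(z)\geq2$, a contradiction, so $(n,\nu)=1$.

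With $(n,\nu)=1$ in hand I would carry out the case split. If $z=t-1$ then $f(1)=0$; when $n$ is odd this is exactly case~(a). When $n$ is even, $G(-1)=0$ so $\Phi_2\mid G$, whence $\mathrm{Res}(F,\Phi_2)=|F(-1)|$ divides $\mathrm{Res}(F,G)=1$; since $f=(t-1)F$ gives $f(-1)=-2F(-1)$, this yields $|f(-1)|=2$ and places us in case~(b). If instead $z=t+1$ then $n$ is even, $f(-1)=0$ and $f(1)\neq0$; here $\Phi_1\mid G$ forces $|F(1)|=1$ and hence $|f(1)|=2$. For the final equality in case~(c), let $c$ be the largest odd divisor of $n$; then $t^c-1=\prod_{d\mid c}\Phi_d$ with each $d$ odd (so $d\neq2$) and $d\mid n$, whence $t^c-1\mid G$ and $\mathrm{Res}(F,t^c-1)$ divides $\mathrm{Res}(F,G)=1$. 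Multiplicativity of the resultant then gives $\mathrm{Res}(f,t^c-1)=\mathrm{Res}(t+1,t^c-1)\,\mathrm{Res}(F,t^c-1)=|(-1)^c-1|\cdot1=2$, completing case~(c).

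The routine parts are the translation via Theorem \ref{thm:SNFresultant} and the resultant factorisations, which rely only on multiplicativity of $\mathrm{Res}(\cdot,\cdot)$ and on identifying $G$ with the relevant cyclotomic products. I expect the two genuinely delicate points to be the derivation of $(n,\nu)=1$ --- where one must use the $\nu$-symmetry of the roots of $f$ and treat the $z=t+1$ sub-case uniformly with $z=t-1$ --- and the bookkeeping in case~(c) that extracts $\mathrm{Res}(f,t^c-1)=2$ from $\mathrm{Res}(F,G)=1$ by isolating the factor $t+1$ and the divisor $t^c-1$ of $G$.
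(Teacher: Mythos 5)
Your proof is correct and follows the same overall skeleton as the paper's: reduce to $\deg(z)=1$ and $\mathrm{Res}(F,G)=1$ via Theorem \ref{thm:SNFresultant}, dispose of sufficiency immediately, and split necessity into the three cases. Two steps differ in substance. For $(n,\nu)=1$ the paper writes $f(t)=\bar f(t^\delta)$, $g(t)=\bar g(t^\delta)$ with $\delta=(n,\nu)$ and observes that $z(t)=\bar z(t^\delta)$, so $\delta$ divides $\deg(z)=1$; you instead exploit the symmetry $f(\omega t)=f(t)$ for $\omega^\nu=1$ to manufacture a second common root of $f$ and $g$ (namely $\omega$ or $-\omega$), forcing $\deg(z)\geq 2$. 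Both are sound; the paper's version does not need to know $z\in\{t-1,t+1\}$ first, while yours does, but you establish that beforehand from the squarefree cyclotomic factorisation of $t^n-1$ (the paper reaches the same dichotomy via Proposition \ref{prop:oddbetti}). In case (c) your derivation of $\mathrm{Res}(f,t^c-1)=2$ is genuinely more direct: you factor $\mathrm{Res}(f,t^c-1)=\mathrm{Res}(t+1,t^c-1)\cdot\mathrm{Res}(F,t^c-1)=|(-1)^c-1|\cdot 1=2$, whereas the paper decomposes $t^c-1$ into its cyclotomic factors and invokes $\Phi_\delta(-1)=\Phi_{2\delta}(1)=1$ for $2\delta$ not a prime power, i.e.\ the machinery of \cite{Apostol}. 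Your shortcut buys a cleaner argument at no cost; the only point to keep explicit is the paper's convention that $\mathrm{Res}$ denotes an absolute value, which you rely on when identifying $\sum_{i=0}^{n-1}(-t)^i=-G$ with $G$ in case (c).
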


\begin{proof}
Let $z(t)=(f(t),g(t))$. If $(n,\nu)=1$ and any of (a),(b),(c) hold then $z(t)=t-\epsilon$ ($\epsilon=1$ in cases (a),(b), $\epsilon=-1$ in case~(c)), and $F(t)=f(t)/(t-\epsilon)$, $G(t)=(t^n-1)/(t-\epsilon)=\sum_{i=0}^{n-1} (\epsilon t)^i$. Thus $n-\rho =\mathrm{deg} (z(t))=1$, $\gamma_\rho=\mathrm{Res} (F,G)=1$, as required.

Suppose then $n-\rho=1$ and $\gamma_\rho=1$. Then $\mathrm{deg}(z(t))=1$ and $\mathrm{Res}(f(t)/z(t),g(t)/z(t))=1$ by Theorem \ref{thm:SNFresultant}. Let $\delta=(n,\nu)$. Then $f(t)=\bar{f}(t^\delta), g(t)=\bar{g}(t^\delta)$ for some $\bar{f},\bar{g}\in \Z[t]$. Thus $z(t)=(f(t),g(t))=(\bar{f}(t^\delta),\bar{g}(t^\delta))=\bar{z}(t^\delta)$, say. Then $n-\rho=1$ implies $\mathrm{deg}(z(t))=1$, so $\bar{z}(s)$ is of degree 1 and $\delta=1$, i.e.\,$(n,\nu)=1$.

If $n$ is odd  then Proposition \ref{prop:oddbetti} implies $f(1)=0$, and so $(t-1)|z(t)$, but since $\mathrm{deg}(z(t))=1$ we have $z(t)=t-1$, $F(t)=f(t)/(t-1)$, $G(t)=g(t)/(t-1)=\sum_{i=0}^{n-1}t^i$, so part (a) follows.

Assume then $n$ is even. Then by Proposition \ref{prop:oddbetti}(b) there is a unique $\epsilon \in\{1,-1\}$ such that $f(\epsilon)=0, f(-\epsilon)\neq 0$. Then $(t-\epsilon)$ divides $z(t)$, but since $\mathrm{deg}(z(t))=1$ we have $z(t)=t-\epsilon$, $F(t)=f(t)/(t-\epsilon), G(t)=g(t)/(t-\epsilon)=\sum_{i=0}^{n-1}(\epsilon t)^i$. Now $g(-\epsilon)=0$ so $G(-\epsilon)=0$ and hence $|F(-\epsilon)|$ divides $\mathrm{Res}(F,G)|=1$. Thus $|F(-\epsilon)|=1$. But $|f(-\epsilon)|=|(-\epsilon-\epsilon)F(-\epsilon)|=2|F(-\epsilon)|$ so $\{ |f(\epsilon)|,|f(-\epsilon)|\}=\{0,2\}$, and the proof of part (b) is complete. To complete the proof of part (c) it remains to show  $\mathrm{Res}(f(t),t^c-1)|=2$ where $c$ is the largest odd divisor of $n$. Now $\mathrm{Res}(f(t)/(t+1), (t^n-1)/(t+1))=1$ so (since $(t^c-1)$ divides $(t^n-1)/(t+1)$) we have $\mathrm{Res}(f(t)/(t+1), t^c-1)=1$ and hence $\mathrm{Res}( (f(t)/(t+1),\Phi_\delta(t))=1$ for all divisors $\delta$ of $c$. Then if $\delta>1$
\begin{alignat*}{1}
\mathrm{Res} (f, \Phi_\delta ) &= \mathrm{Res} ((t+1), \Phi_\delta (t)) \cdot \mathrm{Res} (f(t)/(t+1), \Phi_\delta (t)) \\
&\qquad \qquad = \mathrm{Res} ((t+1), \Phi_\delta (t)) = \Phi_\delta (-1)= \Phi_{2\delta} (1)=1
\end{alignat*}
since $2\delta$ is not a prime power. Hence
\[\mathrm{Res}(f(t), t^c-1)= \prod_{\delta |c} \mathrm{Res}(f, \Phi_\delta)= \mathrm{Res}(f(t),t-1) \cdot \mathrm{Res}(f, \prod_{\delta |c, \delta>1} \Phi_\delta)
= f(1)\cdot 1=2
\]
as required.
\end{proof}

\begin{corollary}\label{cor:positiveword}
Suppose $w$ is a positive word of length at least 3. Then $G_n(w)^\mathrm{ab} \not \cong \Z$.
\end{corollary}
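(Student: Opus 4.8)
The plan is to reduce everything to the key numerical invariant $f(1)$, which for a positive word has a transparent meaning. First I would observe that if $w$ is positive then every exponent sum $a_i$ is non-negative, so the representer polynomial $f(t)=\sum_{i=0}^{n-1}a_i t^i$ has non-negative coefficients and satisfies
\[
f(1)=\sum_{i=0}^{n-1}a_i=\mathrm{length}(w)\geq 3.
\]
This single observation is the crux: positivity converts the hypothesis ``length at least $3$'' into the inequality $f(1)\geq 3$.

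Next I would invoke the characterisation (\ref{eq:G_n(w)=Z}), according to which $G_n(w)^{\mathrm{ab}}\cong\Z$ is equivalent to $n-\rho=1$ together with $\gamma_\rho=1$, and then appeal to Theorem \ref{thm:G^ab=Z}, which says these two conditions force $(n,\nu)=1$ and exactly one of the three mutually exclusive cases (a), (b), (c) to hold. I would then check each case against the constraint $f(1)\geq 3$. Cases (a) and (b) both require $f(1)=0$, which is incompatible with $f(1)\geq 3$; case (c) requires $|f(1)|=2$, again incompatible with $f(1)\geq 3$. Hence none of the three cases can occur, so the necessary conditions of Theorem \ref{thm:G^ab=Z} fail, and therefore $G_n(w)^{\mathrm{ab}}\not\cong\Z$.

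I do not anticipate a genuine obstacle here: once Theorem \ref{thm:G^ab=Z} is in hand, the argument is a short case check, and the only thing to be careful about is correctly identifying $f(1)$ with the word length (i.e.\ that ``positive'' indeed means every $a_i\geq 0$, so no cancellation occurs and $f(1)$ counts occurrences rather than a signed sum). The mild subtlety worth a sentence is that the length-$\geq 3$ hypothesis is sharp in the sense that it is exactly what rules out $|f(1)|\in\{0,2\}$; for positive words of length $1$ or $2$ the conclusion would genuinely fail, which is why the bound appears. Apart from flagging this, the proof is essentially immediate from the earlier results.
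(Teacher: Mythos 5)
Your proof is correct and takes essentially the same route as the paper: identify $f(1)$ with the length of the positive word so that $f(1)\geq 3$, then observe that the necessary conditions of Theorem \ref{thm:G^ab=Z} force $|f(1)|\in\{0,2\}$ in every case, a contradiction. If anything your case-by-case check is slightly more explicit than the paper's one-line version, which only cites the $|f(1)|=2$ alternative.
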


\begin{proof}
Since $w$ is positive of length at least 3, $f(1)\geq 3$ so Theorem \ref{thm:G^ab=Z} implies that if $G_n(w)^\mathrm{ab} \cong \Z$ then $|f(1)|=2$, a contradiction.
\end{proof}

If $w$ is a positive word of length 1 then $G_n(w)$ is trivial. If $w$ is a positive word of length 2, then either $G_n(w)=G_n(x_0^2)\cong \Z_2^n$ or $G_n(w)=G_n(x_0x_k)$ for some $1\leq k<n$, which is free of rank $(n,k)$ if $n/(n,k)$ is even and is the free product of $(n,k)$ copies of $\Z_2$ if $n/(n,k)$ is odd. Therefore, for a positive word $w$,  $G_n(w)^\mathrm{ab}\cong \Z$ if and only if $G_n(w)=G_n(x_0x_k)$ where $n$ is even and $(n,k)=1$, in which case $G_n(w)\cong \Z$ (a connected LOG group).

By \cite[Theorem 1]{SV} the natural HNN extension (see \cite{SV} for the definition) of a cyclically presented group $G_n(w)$ with representer polynomial $f(t)$ is a $k$-knot group ($k\geq 3$) if and only if $|f(1)|=1$. It therefore follows from this and Theorem \ref{thm:G^ab=Z} that a cyclically presented group $G_n(w)$ and its natural HNN extension $\mathcal{G}_n(w)$ cannot both be $k$-knot groups ($k\geq 3$).

Theorem \ref{thm:G^ab=Z} can be applied to particular classes of cyclically presented groups. To illustrate this, we apply it to the Prishchepov groups $P(r,n,k,s,q)$ which have representer polynomials
\[ f(t)=1+t^q+\ldots + t^{(r-1)q}-t^{k-1}(1+t^q+\ldots + t^{(s-1)q}).\]
\begin{corollary}\label{cor:prishchepov}
Suppose that $P(r,n,k,s,q)^\mathrm{ab}\cong \Z$ (which holds, in particular, for connected LOG groups), where $n \geq 1$, $0\leq k, q<n$ and $r,s\geq 1$, $r\neq s$. Then $n$ is even, $(n,k-1,q)=1$, $|r-s|=2$, $q$ is odd, $P(r,c,k,s,q)^\mathrm{ab}\cong \Z_2$  for the largest odd divisor $c$ of $n$, and either (i) $s$ is even; or (ii) $s$ is odd and $k$ is odd.
\end{corollary}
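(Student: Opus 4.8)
The plan is to feed the Prishchepov representer polynomial
$f(t)=1+t^q+\cdots+t^{(r-1)q}-t^{k-1}(1+t^q+\cdots+t^{(s-1)q})$
into Theorem \ref{thm:G^ab=Z} and read off the stated conditions. First I would evaluate $f$ at the two possible real $n$-th roots of unity. At $t=1$ both geometric sums collapse to their lengths, giving $f(1)=r-s$; since $r\neq s$ this is nonzero, so conditions (a) and (b) of Theorem \ref{thm:G^ab=Z} (both of which require $f(1)=0$) are impossible. Hence $P(r,n,k,s,q)^{\mathrm{ab}}\cong\Z$ forces case (c), which immediately yields that $n$ is even and that $|f(1)|=|r-s|=2$.

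Next I would extract the condition $(n,k-1,q)=1$. Writing $\nu=\max\{d : f\in\Z[t^d]\}$, every exponent occurring in $f$ is of the form $jq$ or $(k-1)+jq$, so $(q,k-1)$ divides every such exponent and hence divides $\nu$. Therefore $(n,k-1,q)=\gcd(n,(q,k-1))$ divides $(n,\nu)$, and the hypothesis $(n,\nu)=1$ of case (c) gives $(n,k-1,q)=1$. I note that this direction needs only $(q,k-1)\mid\nu$, so I can sidestep any discussion of possible cancellation among the coefficients of $f$.

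The parity analysis is the heart of the argument, since case (c) requires $f(-1)=0$. I would split on the parity of $q$. If $q$ is even then $t^{jq}\big|_{t=-1}=1$ for all $j$, so $f(-1)=r-(-1)^{k-1}s$, which equals $r-s$ or $r+s$; since $r\neq s$ and $r,s\geq 1$, neither vanishes, a contradiction, so $q$ must be odd. With $q$ odd, $\sum_{j=0}^{\ell-1}(-1)^{jq}=\sum_{j=0}^{\ell-1}(-1)^j$ equals $1$ if $\ell$ is odd and $0$ if $\ell$ is even, so $f(-1)=\sigma_r-(-1)^{k-1}\sigma_s$ where $\sigma_\ell\in\{0,1\}$ records the parity of $\ell$. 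Because $|r-s|=2$ the integers $r,s$ share a parity: if both are even then $\sigma_r=\sigma_s=0$ and $f(-1)=0$ automatically (alternative (i), $s$ even); if both are odd then $f(-1)=1-(-1)^{k-1}$, which vanishes exactly when $k$ is odd (alternative (ii), $s$ odd and $k$ odd). These are precisely the two listed alternatives.

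Finally, the remaining assertion $P(r,c,k,s,q)^{\mathrm{ab}}\cong\Z_2$ is the translation of the last clause of case (c), namely $\mathrm{Res}(f,t^c-1)=2$ for $c$ the largest odd divisor of $n$. Since $|G_n(w)^{\mathrm{ab}}|=\mathrm{Res}(f,t^n-1)$ whenever finite, reducing $f$ modulo $t^c-1$ identifies $\mathrm{Res}(f,t^c-1)=|P(r,c,k,s,q)^{\mathrm{ab}}|$, and an abelian group of order $2$ is $\Z_2$. I expect no serious obstacle here: the whole corollary is a bookkeeping exercise feeding the Prishchepov polynomial into Theorem \ref{thm:G^ab=Z}. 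The one place demanding genuine care is the evaluation $f(-1)$, where one must track how the sign $(-1)^{k-1}$ interacts with the parities of $r$ and $s$ (controlled by $|r-s|=2$) so as to produce exactly the clean dichotomy (i)/(ii).
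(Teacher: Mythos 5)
Your proposal is correct and follows essentially the same route as the paper: substitute the Prishchepov representer polynomial into Theorem \ref{thm:G^ab=Z}, rule out cases (a) and (b) via $f(1)=r-s\neq 0$, deduce $(n,k-1,q)\mid(n,\nu)=1$, carry out the parity analysis of $f(-1)$ to get the conditions on $q$, $r$, $s$, $k$, and translate $\mathrm{Res}(f,t^c-1)=2$ into $P(r,c,k,s,q)^\mathrm{ab}\cong\Z_2$. Your write-up is in fact somewhat more detailed than the paper's (notably the explicit $\sigma_r-(-1)^{k-1}\sigma_s$ computation and the remark that $(q,k-1)\mid\nu$ is unaffected by coefficient cancellation), but there is no substantive difference in approach.
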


\begin{proof}
Suppose $P(r,n,k,s,q)^\mathrm{ab}\cong \Z$. If $n$ is odd then Theorem \ref{thm:G^ab=Z} implies $f(1)=0$, i.e.\,$r-s=0$, a contradiction to $r\neq s$. Thus $n$ is even. Then $(n,k-1,q)$ divides $(n,\nu)$ so $(n,k-1,q)=1$. Again $f(1)=r-s\neq 0$ so Theorem \ref{thm:G^ab=Z} implies $f(1)=2, f(-1)=0$ and $2=\mathrm{Res}(f(t),t^c-1)=|P(r,c,k,s,q)^\mathrm{ab}|$ (and so $P(r,c,k,s,q)^\mathrm{ab}\cong \Z_2$) for the largest odd divisor $c$ of $n$. But $f(1)=r-s$ so $|r-s|=2$, and $f(-1)=0$ if and only if $q$ is odd and either $r,s$ are both even or $r,s,k$ are all odd, and the result follows.
\end{proof}

If $n,s$ are even and $(n,(s+1)q)=1$ then $P(s+2,n,q+1,s,q)^\mathrm{ab}\cong G_n(x_0x_{(s+1)q})^\mathrm{ab} \cong G_n(x_0x_{1})^\mathrm{ab}\cong \Z$. In this case the representer polynomial $f(t)=1+t^{(s+1)q}$. However, there are examples of groups $P(r,n,k,s,q)$ that abelianize to $\Z$ where $f(t)$ is more complicated. For example the group $P(4,10,3,2,7)$ with $f(t)=1+t-t^2+t^4+t^7-t^9=-(t+1)(t^8-t^7-t^3+t^2-1)$. Determining precisely which groups $P(r,n,k,s,q)$ abelianize to $\Z$ is a topic for future research.

\section*{Acknowledgements}

Both authors thank the Department of Computer Science and the Department of Mathematics at the University of Pisa for their hospitality during a visit in November 2019 when part of this work was carried out.

  \textsc{Department of Mathematics and Systems Analysis, Aalto University, P.O. Box 11000 (Otakaari 24), FI-00076 AALTO, Finland.}\par\nopagebreak
  \textit{E-mail address}, \texttt{vanni.noferini@aalto.fi}

  \textsc{Department of Mathematical Sciences, University of Essex, Wivenhoe Park, Colchester, Essex CO4 3SQ, UK.}\par\nopagebreak
  \textit{E-mail address}, \texttt{gerald.williams@essex.ac.uk}
\end{document}